\newtheorem{theorem}{Theorem}[section]
\newtheorem{lemma}[theorem]{Lemma}
\newtheorem{proposition}[theorem]{Proposition}
\newtheorem{question}[theorem]{Question}
\theoremstyle{definition}
\newtheorem{definition}[theorem]{Definition}
\theoremstyle{remark}
\newtheorem{remark}[theorem]{Remark}
\newtheorem{example}[theorem]{Example}
\newtheorem{case}{Case}
\numberwithin{subcase}{case}
\numberwithin{subcase2}{case2}
\newcommand{\overbar}[1]{\mkern 1.5mu\overline{\mkern-1.5mu#1\mkern-1.5mu}\mkern 1.5mu}
\def\@seccntformat#1{%
  \protect\textup{\protect\@secnumfont
    \ifnum\pdfstrcmp{subsection}{#1}=0 \bfseries\fi
    \csname the#1\endcsname
    \protect\@secnumpunct
  }%
}  
\def\subsection{\@startsection{subsection}{3}%
  \z@{.7\linespacing\@plus.7\linespacing}{0.5\linespacing}%
  {\normalfont\bfseries}}
\subjclass[2010]{68R15, 68Q45, 05A05}
\keywords{Parikh matrices, subword, injectivity problem, strong \mbox{$M$\!-equivalence}}
\begin{document}
\title[Strong $\boldsymbol{(2\cdot t)}$ and Strong $\boldsymbol{(3\cdot t)}$ Transformations]{Strong $\boldsymbol{(2\cdot t)}$ and Strong $\boldsymbol{(3\cdot t)}$ Transformations for Strong \textit{M}-equivalence}
\author{Ghajendran Poovanandran}
\address{School of Mathematical Sciences\\
Universiti Sains Malaysia\\
11800 USM, Malaysia}
\email{p.ghajendran@gmail.com}
\author{Wen Chean Teh}
\address{School of Mathematical Sciences\\
Universiti Sains Malaysia\\
11800 USM, Malaysia}
\email[Corresponding author]{dasmenteh@usm.my}

\begin{abstract}
Parikh matrices have been extensively investigated due to their usefulness in studying subword occurrences in words. Due to the dependency of Parikh matrices on the ordering of the alphabet, strong $M$\!-equivalence was proposed as an order-independent alternative to $M$\!-equivalence in studying words possessing the same Parikh matrix. This paper introduces and studies the notions of strong $(2\cdot t)$ and strong $(3\cdot t)$ transformations in determining when two ternary words are strongly $M$\!-equivalent. The irreducibility of strong $(2\cdot t)$ transformations are then scrutinized, exemplified by a structural characterization of irreducible strong $(2\cdot 2)$ transformations. The common limitation of these transformations in characterizing strong $M$\!-equivalence is then addressed.
\end{abstract}

\smallskip

\maketitle
\section{Introduction}
The extension of Parikh vectors \cite{rP66} into Parikh matrices \cite{MSSY01} is known for its usefulness in studying (scattered) subword occurrences in words (for example, see \cite{MSY04,aS05b,aS06}). However, not every word is uniquely determined by its Parikh matrix. The injectivity problem, which asks for a  characterization of words sharing the same Parikh matrix, has received extensive interest (for example, see \cite{aA07,AAP08,aA14,AMM02,AT16,FR04,MS12,aS10,vS09,SS06,wT14,wT16,wT15b,wT16b,wT16c,TK15}). This, together with the fact that Parikh matrices are dependent on the ordering of the alphabet, led to the introduction of strong $M$\!-equivalence in \cite{wT15b}. Two words are strongly $M$\!-equivalent if and only if they share the same Parikh matrix with respect to every ordered alphabet with the same underlying alphabet.

The characterization of strongly $M$\!-equivalent words, which we now term as the strong injectivity problem, remains open even for the case of the ternary alphabet. In this paper, we develop a canonical generalization (exclusively for the ternary alphabet) of certain elementary rewriting rules that preserves strong $M$\!-equivalence, which we propose as the strong $(2\cdot t)$ transformation\footnote{The choice of term is due to the fact that this transformation is an analogue of the $(2\cdot t)$ transformation, introduced by Atanasiu in \cite{aA14}.}. In addition, we introduce new symmetric transformations that preserve strong \mbox{$M$\!-equivalence} of ternary words, which we term as the strong $(3\cdot t)$ transformations.

The remainder of this paper is structured as follows. Section 2 provides the basic definitions and terminology. The next section introduces and studies the notions of strong $(2\cdot t)$ and strong $(3\cdot t)$ transformations. Following up, Section~4 looks into the reducibility of strong $(2\cdot t)$ transformations into simpler ones, where the irreducible transformations are characterized. Section 5 addresses the extent of strong $(2\cdot t)$ and strong $(3\cdot t)$ transformations in characterizing strong $M$\!-equivalence for the ternary alphabet. Finally, our conclusion follows after that.

\section{Preliminaries}
The cardinality of a set $A$ is denoted by $|A|$.

Let $\Sigma$ be an (finite) alphabet. The set of all words over $\Sigma$ is denoted by $\Sigma^*$. The unique empty word is denoted by $\lambda$. Given two words $v,w\in\Sigma^*$, the concatenation of $v$ and $w$ is denoted by $vw$. In this paper, we frequently deal with alphabets with a total ordering assigned to it (i.e. \textit{ordered alphabets}). For example, if $a_1<a_2<\cdots <a_k$, then we may write $\Sigma=\{a_1<a_2<\cdots <a_k\}$. Conversely, if $\Sigma=\{a_1<a_2<\cdots <a_k\}$ is an ordered alphabet, we shall term $\{a_1,a_2,\ldots ,a_k\}$ as the \textit{underlying alphabet}. For convenience, we shall abuse notation and use $\Sigma$ to denote both the ordered alphabet and its underlying alphabet. For $\Sigma=\{a_1<a_2<\cdots <a_s\}$, we denote by $a_{i,j}$ the word $a_ia_{i+1}\ldots a_j$ for $1\le i\le j\le s$.

\begin{definition}
Suppose $\Sigma$ is an alphabet and $v,w\in\Sigma^*$. 
\begin{enumerate}[leftmargin=1.1cm]
\item We say that $v$ is a \textit{scattered subword} (or simply \textit{subword}) of $w$ if and only if there exist $x_1,x_2,\ldots ,x_n,y_0,y_1,\ldots ,y_n\in\Sigma^*$ with some of them possibly being empty, such that
\begin{center}
$v=x_1x_2\cdots x_n$ and $w=y_0x_1y_1\cdots y_{n-1}x_ny_n$.
\end{center}
\item We say that $v$ is a \textit{factor} of $w$ if and only if there exist $x,y\in\Sigma^*$ such that $w=xvy$.
\end{enumerate}
\end{definition}

We denote by $|w|_v$, the number of occurrences of the word $v$ as a subword of $w$. If two occurrences of subword $v$ in a word $w$ differ by at least one position of any letter, then the two occurrences are considered different. For example, $|abccc|_{abc}=3$. By convention, $|w|_\lambda=1$ for all $w\in\Sigma^*$.

For $k\ge 1$, let $\mathcal{M}_k$ denote the multiplicative monoid of $k\times k$ upper triangular matrices with nonnegative integral entries and unit diagonal.
\begin{definition}
Suppose $\Sigma=\{a_1<a_2<\cdots <a_k\}$ is an ordered alphabet. The \textit{Parikh matrix mapping} with respect to $\Sigma$, denoted by $\Psi_\Sigma$, is the morphism:
\begin{equation*}
\Psi_\Sigma:\Sigma^*\rightarrow\mathcal{M}_{k+1},
\end{equation*}
defined such that $\Psi_\Sigma(a_q)=(m_{i,j})_{1\le i,j\le k+1}$, where  
$m_{i,i}=1$ for all $1\le i\le k+1$, $m_{q,q+1}=1$, and all other entries of the matrix $\Psi_\Sigma(a_q)$ are zero. Matrices of the form $\Psi_\Sigma(w)$ for $w\in\Sigma^*$ are termed as  \textit{Parikh matrices}.
\end{definition}
\begin{theorem}\cite{MSSY01}\label{PropertiesParikhMat}
Suppose $\Sigma=\{a_1<a_2<\cdots <a_k\}$ is an ordered alphabet and $w\in\Sigma^*$. The matrix $\Psi_\Sigma(w)=(m_{i,j})_{1\le i,j\le k+1}$ has the following properties:
\begin{enumerate}
\item $m_{i,j}=0$ for all $1\le j<i\le k+1$;
\item $m_{i,i}=1$ for all $1\le i\le k+1$;
\item $m_{i,j+1}=|w|_{a_{i,j}}$ for $1\le i\le j\le k$.
\end{enumerate}
\end{theorem}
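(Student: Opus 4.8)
The plan is to obtain properties (1) and (2) directly from the fact that $\Psi_\Sigma$ is a monoid morphism, and to prove the substantive property (3) by induction on the length of $w$.

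For (1) and (2): every generator $\Psi_\Sigma(a_q)$ belongs to $\mathcal{M}_{k+1}$ by construction, being upper triangular with unit diagonal and nonnegative integer entries. Since $\mathcal{M}_{k+1}$ is closed under matrix multiplication and contains the identity, and $\Psi_\Sigma$ is the morphism generated by the assignments $a_q \mapsto \Psi_\Sigma(a_q)$, a one-line induction on $|w|$ shows $\Psi_\Sigma(w) \in \mathcal{M}_{k+1}$ for every $w \in \Sigma^*$. This is exactly the conjunction of (1) and (2).

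For (3), I would induct on $n = |w|$. When $n = 0$ we have $w = \lambda$ and $\Psi_\Sigma(\lambda)$ is the identity matrix, so $m_{i,j+1} = 0$ for all $1 \le i \le j \le k$, which matches $|\lambda|_{a_{i,j}} = 0$ since $a_{i,j}$ is a nonempty word. For the inductive step, write $w = w' a_q$ with $|w'| = n-1$ and set $\Psi_\Sigma(w') = (m'_{i,j})$. Because $\Psi_\Sigma(a_q)$ differs from the identity only in the single entry at position $(q, q+1)$, expanding the product $\Psi_\Sigma(w) = \Psi_\Sigma(w')\,\Psi_\Sigma(a_q)$ gives
\begin{equation*}
m_{i,j+1} = m'_{i,j+1} + \delta_{j,q}\, m'_{i,q},
\end{equation*}
where $\delta$ denotes the Kronecker delta. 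On the combinatorial side, each occurrence of $a_{i,j}$ as a subword of $w'a_q$ either lies entirely within $w'$, or uses the final letter $a_q$ as its last symbol; the latter is possible only if $a_j = a_q$, that is $j = q$, and then the remaining prefix $a_i a_{i+1}\cdots a_{j-1}$ (understood to be $\lambda$ when $i = j$) must occur in $w'$. Hence
\begin{equation*}
|w'a_q|_{a_{i,j}} = |w'|_{a_{i,j}} + \delta_{j,q}\, |w'|_{a_{i,j-1}},
\end{equation*}
with the convention $|w'|_{a_{i,i-1}} := |w'|_\lambda = 1$. The induction hypothesis identifies $m'_{i,j+1} = |w'|_{a_{i,j}}$; it also gives $m'_{i,q} = m'_{i,(q-1)+1} = |w'|_{a_{i,q-1}}$ when $i < q$, while property (2) gives $m'_{q,q} = 1 = |w'|_\lambda$ when $i = q$. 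In either case the two displayed recursions agree, completing the induction.

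The one point requiring care is the degenerate case $i = j = q$ in the inductive step, where the prefix preceding the final letter is empty; there one must match the convention $|w|_\lambda = 1$ against the diagonal entry $m'_{q,q} = 1$ furnished by property (2). Apart from that, the argument is just a careful expansion of the matrix product exploiting the single off-diagonal nonzero entry of each $\Psi_\Sigma(a_q)$, so I anticipate no genuine obstacle.
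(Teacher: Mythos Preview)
Your proof is correct. The paper does not actually prove this theorem; it is quoted from \cite{MSSY01} as a foundational fact about Parikh matrices, so there is no in-paper argument to compare against. Your induction on $|w|$, exploiting that $\Psi_\Sigma(a_q)$ is the identity plus a single off-diagonal unit at position $(q,q+1)$ and matching the resulting recursion against the obvious subword-counting recursion, is the standard proof and is essentially what one finds in the original source.
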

The second diagonal of the Parikh matrix $\Psi_\Sigma(w)$ of a word  $w\in\Sigma^*$ contains the \textit{Parikh vector} $\Psi(w)=(|w|_{a_1},|w|_{a_2},\ldots ,|w|_{a_k})$ of that word $w$.
\begin{example}\label{exampleParikhmap}
Consider the ordered alphabet $\Sigma=\{a<b<c\}$. Then, the Parikh matrix of the word $abccc$ with respect to $\Sigma$ can be computed as follows:
\begin{align*}
\Psi_\Sigma(abccc)&=\Psi_\Sigma(a)\Psi_\Sigma(b)\Psi_\Sigma(c)\Psi_\Sigma(c)\Psi_\Sigma(c)\\[2pt]
&=\begin{pmatrix}
1 & 1 & 0 & 0\\
0 & 1 & 0 & 0\\
0 & 0 & 1 & 0\\
0 & 0 & 0 & 1
\end{pmatrix} 
\begin{pmatrix}
1 & 0 & 0 & 0\\
0 & 1 & 1 & 0\\
0 & 0 & 1 & 0\\
0 & 0 & 0 & 1
\end{pmatrix}\cdots
\begin{pmatrix}
1 & 0 & 0 & 0\\
0 & 1 & 0 & 0\\
0 & 0 & 1 & 1\\
0 & 0 & 0 & 1
\end{pmatrix}\\
&=\begin{pmatrix}
1 & 1 & 1 & 3\\
0 & 1 & 1 & 3\\
0 & 0 & 1 & 3\\
0 & 0 & 0 & 1
\end{pmatrix}.
\end{align*}
\end{example}

\begin{definition}\label{DefMequivalence}
Suppose $\Sigma=\{a_1<a_2<\ldots <a_k\}$ is an ordered alphabet. Two words $w,w'\in\Sigma^*$ are {\it $M$\!-equivalent}, denoted by $w\equiv_Mw'$, if and only if $\Psi_\Sigma(w)=\Psi_\Sigma(w')$.
\end{definition}
For the ternary alphabet, the following two rewriting rules can be used to determine whether two words are $M$\!-equivalent. The first rule is elementary while the second rule was introduced by Atanasiu in \cite{aA14}. 

Suppose $\Sigma=\{a<b<c\}$ and $w,w'\in\Sigma^*$.

\vspace{0.5em}\hspace{-1.5em}\begingroup
\setlength{\tabcolsep}{2pt}
\begin{tabular}{lll}
Rule $E1$ &: &If $w=xacy$ and $w'=xcay$ for some $x,y\in\Sigma^*$, then $w\equiv_Mw'$.\\
Rule $E2\cdot t$ &: &Suppose $w$ contains $t\ge 1$ factors of the form
\end{tabular}
\endgroup\\
\begin{center}
$\beta_k =\begin{cases}
\alpha_kb x_k b\alpha_k &\text{for } 1\leq k \leq t'\\
b\alpha_k x_k \alpha_kb &\text{for } t'+1\leq k\leq t,
\end{cases}$
\end{center}
such that the following holds:
\begin{itemize}
\item $\alpha_k\in\{a,c\}$ and $x_k\in\Sigma^*$ for every $1\le k\le t$;
\item $\alpha_m b$, $b\alpha_m$, $\alpha_nb$ and $b\alpha_n$ do not overlap each other whenever $m\neq n$;
\item $\sum_{k=1}^{t'} |x_k|_{\overbar{\alpha_k}}=\sum_{k=t'+1}^t |x_k|_{\overbar{\alpha_k}}$, where $\overbar{\alpha_k}\in\Sigma\backslash\{\alpha_k, b\}$ for every $1\le k\le t$.
\end{itemize}
If $w'$ is obtained from $w$ by (simultaneously) swapping $\alpha_k b$ with $b\alpha_k$ for every $1\le k\le t$, then $w\equiv_M w'$.
\begin{remark}
The notion of \textit{elementary matrix equivalence (ME-equivalence)}, introduced in \cite{aS10}, is composed of Rule $E1$ and Rule $E2\cdot 1$.
\end{remark}

\begin{definition}\label{2tTransformation}
Suppose $\Sigma=\{a<b<c\}$ and $w,w'\in\Sigma^*$. We say that the transformation of $w$ into $w'$ is a \emph{($2\cdot t$) transformation}, denoted by $w\underset{(2\cdot t)}{\longrightarrow}w'$, if and only if $w'$ is obtained from $w$ by an application of Rule $E2\cdot t$.
\end{definition}

\begin{example}
Suppose $\Sigma=\{a<b<c\}$ and
\begin{equation*}
w=\boldsymbol{ab}c\boldsymbol{ba}\textbf{bc}a\textbf{cb}cb\underset{(2\cdot 2)}{\longrightarrow}bacabcba\boldsymbol{bccb}\underset{(2\cdot 1)}{\longrightarrow}bacabcbacbbc=w'.
\end{equation*}Then, $w\equiv_Mw'$.
\end{example}

\begin{definition}
Suppose $\Sigma$ is an alphabet and $w,w'\in\Sigma^*$. Then $w$ and $w'$ are \textit{strongly $M$\!-equivalent}, denoted by $w\overset{s}{\equiv}_Mw'$, if and only if $w\equiv_M w'$ with respect to any ordered alphabet with \mbox{underlying} alphabet $\Sigma$.
\end{definition}

\begin{remark}\label{SufficientOrder}\cite{GT16a}
Suppose $\Sigma=\{a,b,c\}$ and $w,w'\in\Sigma^*$. Then $w\overset{s}{\equiv}_Mw'$ if and only if $w\equiv_Mw'$ with respect to each of the ordered alphabets $\{a<b<c\}$, $\{b<a<c\}$, and $\{a<c<b\}$.
\end{remark}

The following Rule \emph{SE}, introduced in \cite{wT15b}, is elementary in deciding whether two words are strongly $M$\!-equivalent.
\vspace{0.5em}\begin{itemize}[leftmargin=1.8cm]
\item[Rule \textit{SE}.] Suppose $\Sigma$ is an alphabet and $w,w'\in\Sigma^*$. If $w=xabybaz$ and $w'=xbayabz$ for some distinct $a,b\in\Sigma$, $x,z\in\Sigma^*$, and $y\in\{a,b\}^*$, then $w\overset{s}{\equiv}_Mw'$.
\end{itemize}

\begin{definition}
Suppose $\Sigma=\{a,b,c\}$. Two words $w,w'\in\Sigma^*$ are {\it strongly elementarily M-equivalent (MSE-equivalent)}, denoted by $w\equiv_{\text{\textit{MSE}}}w'$, if and only if $w'$ can be obtained from $w$ by finitely many applications of Rule \textit{SE}.
\end{definition}

\section{Strong ($2\cdot t$) and Strong ($3\cdot t$) Transformations}\label{strictsec}

\subsection{Strong $\boldsymbol{(2\cdot t)}$ Transformation}

\begin{theorem}\label{Strong2tTransformation}
Suppose $\Sigma=\{a,b,c\}$ and $w,w'\in\Sigma^*$. Suppose $w$ contains $t\ge 1$ factors of the form
\begin{equation*}
\beta_k =\begin{cases}
ab x_k ba &\text{for } 1\leq k \leq t'\\
ba x_k ab &\text{for } t'+1\leq k\leq t
\end{cases}
\end{equation*}\\
such that the following conditions hold:
\begin{enumerate}
\item $x_k\in\Sigma^*$ for $1\le k\le t$;
\item ab and ba in $\beta_m$ do not overlap with ab and ba in $\beta_n$ whenever $m\neq n$;
\item $\sum_{k=1}^{t'} |x_k|_c=\sum_{k=t'+1}^t |x_k|_c$.
\end{enumerate}
If $w'$ is obtained from $w$ by (simultaneously) rewriting $\beta_k$ into $\beta_k'$ for every $1\le k\le t$, where
\begin{equation*}
\beta_k =\begin{cases}
ba x_k ab &\text{for } 1\leq k \leq t'\\
ab x_k ba &\text{for } t'+1\leq k\leq t,
\end{cases}
\end{equation*}\\
then $w$ and $w'$ are strongly $M$\!-equivalent.
\end{theorem}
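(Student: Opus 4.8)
The plan is to invoke Remark~\ref{SufficientOrder} and reduce the claim to checking that $w\equiv_M w'$ with respect to each of the three ordered alphabets $\{a<b<c\}$, $\{b<a<c\}$, and $\{a<c<b\}$. In every case the passage from $w$ to $w'$ consists of interchanging $ab$ with $ba$ in the $2t$ length-two windows formed by the leading and trailing factors of the $\beta_k$'s, and condition~(2) guarantees that these windows are pairwise disjoint, so the simultaneous rewriting is unambiguous and may be carried out one window at a time. The ordered alphabet $\{a<c<b\}$ is then immediate: here $a$ and $b$ are the smallest and the largest letter, so Rule~$E1$ allows interchanging $ab$ and $ba$ at any position, and performing the $2t$ swaps successively, using transitivity of $\equiv_M$, yields $w\equiv_M w'$ with respect to $\{a<c<b\}$ (condition~(3) plays no role here).

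Next I would treat $\{a<b<c\}$ by exhibiting the transformation as a single application of Rule~$E2\cdot t$ with $\alpha_k=a$ for every $k$, so that $\overbar{\alpha_k}=c$ throughout. Under this choice the factors $\beta_k=abx_kba$ with $1\le k\le t'$ are exactly the factors of the first shape $\alpha_kbx_kb\alpha_k$, the factors $\beta_k=bax_kab$ with $t'+1\le k\le t$ are exactly those of the second shape $b\alpha_kx_k\alpha_kb$, condition~(2) is the non-overlap hypothesis of Rule~$E2\cdot t$, and condition~(3), namely $\sum_{k=1}^{t'}|x_k|_c=\sum_{k=t'+1}^{t}|x_k|_c$, is precisely its counting hypothesis $\sum_{k=1}^{t'}|x_k|_{\overbar{\alpha_k}}=\sum_{k=t'+1}^{t}|x_k|_{\overbar{\alpha_k}}$. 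Since interchanging $\alpha_kb$ with $b\alpha_k$ sends each $\beta_k$ to the prescribed $\beta_k'$, this gives $w\equiv_M w'$ with respect to $\{a<b<c\}$.

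Finally, $\{b<a<c\}$ is order-isomorphic to $\{a<b<c\}$ via the transposition of the symbols $a$ and $b$, and the Parikh matrix mapping is invariant under such relabellings of the alphabet, so $w\equiv_M w'$ with respect to $\{b<a<c\}$ amounts to the same statement for the images of $w,w'$ under this transposition with respect to $\{a<b<c\}$, which is again an instance of Rule~$E2\cdot t$ (still with $\overbar{\alpha_k}=c$, as $c$ stays the largest letter). The only change from the previous case is that the blocks $\{\beta_k:t'+1\le k\le t\}$ and $\{\beta_k:1\le k\le t'\}$ now furnish the first and the second shapes, which is harmless for condition~(3) since that condition is an equality. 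Combining the three cases via Remark~\ref{SufficientOrder} yields $w\overset{s}{\equiv}_M w'$. The one place that needs care is this bookkeeping---lining up conditions~(2) and~(3) with the hypotheses of Rule~$E2\cdot t$ and tracking which block of factors plays which role in the two orderings in which $c$ is the largest letter---while everything else follows directly from the already-established rules.
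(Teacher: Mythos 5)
Your proposal is correct and follows essentially the same route as the paper: reduce via Remark~\ref{SufficientOrder} to the three orderings, recognize the transformation as an application of Rule $E2\cdot t$ under $\{a<b<c\}$ and $\{b<a<c\}$ (with the two blocks of factors exchanging roles), and as repeated applications of Rule $E1$ under $\{a<c<b\}$ since $a$ and $b$ are not consecutive there. The only cosmetic difference is that for $\{b<a<c\}$ the paper directly observes the transformation is a $(2\cdot t)$ transformation under that ordering, whereas you pass through the $a\leftrightarrow b$ relabelling isomorphism; both are valid and amount to the same verification.
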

\begin{proof}By Remark \ref{SufficientOrder}, it is sufficient to prove $w\equiv_Mw'$ with respect to each of the following ordered alphabets:
\begin{case}
$\{a<b<c\}$.\\
The transformation of $w$ into $w'$ is a ($2\cdot t$) transformation under $\{a<b<c\}$. Thus, $w\equiv_M w'$.
\end{case}
\begin{case}
$\{b<a<c\}$.\\
The transformation of $w$ into $w'$ is a ($2\cdot t$) transformation under $\{b<a<c\}$. Thus, $w\equiv_M w'$.
\end{case}
\begin{case}
$\{a<c<b\}$.\\
Since $a$ and $b$ are not consecutive, the swapping of $ab$ and $ba$ is simply by Rule $E1$. Thus, $w\equiv_M w'$.
\end{case}
Hence, $w\overset{s}{\equiv}_Mw'$.
\end{proof}
\begin{definition}\label{DefineStrong2t}
Suppose $\Sigma=\{a,b,c\}$ and $w$,$w'\in {\Sigma}^*$. If $w'$ is obtained from $w$ as in Theorem \ref{Strong2tTransformation}, where the number of factors $\beta_k$ involved is $t$, we say that the transformation of $w$ into $w'$ is a \emph{strong ($2\cdot t$) transformation}, denoted by $w\overset{s}{\underset{(2\cdot t)}{\longrightarrow}}w'$. The rewriting rule used on $w$ to obtain $w'$ is addressed as Rule $S2\cdot t$.
\end{definition}

\begin{remark}\label{SEBaseCase}
Rule $S2\cdot 1$ is actually Rule \textit{SE}, that is to say, Rule \textit{SE} is simply the basic case of Rule $S2\cdot t$. 
\end{remark}

\begin{example}\label{histexam}
$w=b\textit{\textbf{ab}}c\textit{\textbf{ba}}bc\textit{\textbf{ba}}bcb\textit{\textbf{ab}}\overset{s}{\underset{(2\cdot 2)}{\longrightarrow}}b\textit{\textbf{ba}}c\textit{\textbf{ab}}bc\textit{\textbf{ab}}bcb\textit{\textbf{ba}}=w'$.
\end{example}
Example \ref{histexam} holds the pair of words used as a counterexample by {\c S}erb{\v a}nu{\c t}{\v a} in \cite{vS09} to show that \textit{ME}-equivalence does not characterize \textit{M}-equivalence. They are strongly $M$\!-equivalent with respect to the alphabet $\{a,b,c\}$ as well. However, since Rule \textit{SE} cannot be applied anywhere on the first word, they are not \textit{MSE}-equivalent.

\begin{remark}\label{FactorsNotUnique}
The set of $t$ factors associated with a strong $(2\cdot t)$ transformation is not invariably unique.
\end{remark}
\begin{example}
Consider $\boldsymbol{ac}b\boldsymbol{ca}b\boldsymbol{ca}b\boldsymbol{ac}\overset{s}{\underset{(2\cdot 2)}{\longrightarrow}}cabacbacbca$. Then, the associated factors $\{\beta_1,\beta_2\}$ can be either $\{\boldsymbol{ac}b\boldsymbol{ca},\boldsymbol{ca}b\boldsymbol{ac}\}$ or $\{\boldsymbol{ac}bcab\boldsymbol{ca},\boldsymbol{ca}bcab\boldsymbol{ac}\}$.
\end{example}
Remark \ref{SEBaseCase} highlights the fact that Rule $S2\cdot t$ provides a richer partial characterization of strong M-equivalence for the ternary alphabet, compared to Rule \textit{SE}. At this point, we should attempt to answer the following question. 
\begin{question}
For which subclass of ternary words does strong ($2\cdot t$) transformation characterize strong M-equivalence but \textit{MSE}-equivalence fails to do so?
\end{question}
The answer to this question remains as an open problem. However, the next theorem attempts to provide a partial insight to this question, where pairs of words that are strongly $M$\!-equivalent under the strong $(2\cdot t)$ transformation but not \textit{MSE}-equivalent are generated. 

\begin{lemma}\label{ClosedUnderSE}
Suppose $\Sigma=\{a,b,c\}$, $w,w'\in\Sigma^*$ with $|w|_c\ge 1$. Suppose  $w=u_0cu_1c\cdots cu_t$ for some $u_i\in\{a,b\}^*$ $(0\le i\le t)$ such that $|u_i|_a,|u_i|_b\ge 1$ for every $1\le i\le t-1$. Write $w$ in the form $w_1cw_2$ for some $w_1,w_2\in\Sigma^*$. If $w'\equiv_{\textit{MSE}}w$, then $w'$ has the form $w'_1cw'_2$ for some $w'_1,w'_2\in\Sigma^*$ such that $w'_1\equiv_{\textit{MSE}}w_1$ and $w'_2\equiv_{\textit{MSE}}w_2$.
\end{lemma}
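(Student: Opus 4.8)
The plan is to show that, under the hypothesis, every application of Rule \textit{SE} is forced to be ``local'': it can only rewrite a factor lying over the alphabet $\{a,b\}$, hence a factor sitting inside one of the blocks $u_i$, and it leaves all occurrences of the letter $c$ (and all other blocks) fixed. Granting this, an \textit{MSE}-derivation from $w$ to $w'$ decomposes into independent per-block derivations, and the lemma follows at once. Throughout, recall that an application of Rule \textit{SE} rewrites a factor $pqyqp$ into $qpypq$ (or $qpypq$ into $pqyqp$) for some distinct $p,q$ and $y\in\{p,q\}^*$; call $\{p,q\}$ the active pair.

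The key claim I would isolate first is: \emph{if $v=v_0cv_1c\cdots cv_t$ with $v_i\in\{a,b\}^*$ and $|v_i|_a,|v_i|_b\ge 1$ for $1\le i\le t-1$, then no application of Rule \textup{\textit{SE}} to $v$ has active pair containing $c$.} Suppose otherwise, say the active pair is $\{a,c\}$ (the case $\{b,c\}$ being symmetric). The rewritten factor then lies over $\{a,c\}$ and contains at least two occurrences of $c$; let these be the $i$-th through the $(i+r)$-th occurrences of $c$ in $v$, reading left to right, with $r\ge 1$. Being a factor of $v$, the rewritten factor contains everything strictly between the $i$-th and $(i+1)$-th occurrences of $c$, i.e.\ it contains the entire block $v_i$; hence $v_i$ is a $c$-free word over $\{a,c\}$, so $v_i\in\{a\}^*$ and $|v_i|_b=0$. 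But the $i$-th occurrence of $c$ is genuine, so $i\ge 1$, and the $(i+1)$-th occurrence of $c$ is genuine as well (it too lies inside the rewritten factor), so $i\le t-1$; thus $v_i$ is an interior block and $|v_i|_b\ge 1$, a contradiction.

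Consequently, any application of Rule \textit{SE} to such a $v$ has active pair $\{a,b\}$, so its rewritten factor is nonempty and $c$-free and therefore sits inside a single block $v_\ell$; the rewriting is then precisely a Rule-\textit{SE} step performed on $v_\ell$, leaving every occurrence of $c$ and every block $v_i$, $i\ne\ell$, unchanged, and — since Rule \textit{SE} preserves Parikh vectors — the resulting word again satisfies the hypothesis with the same value of $t$. Now I would induct on the length of an \textit{MSE}-derivation $w=v^{(0)}\to v^{(1)}\to\cdots\to v^{(m)}=w'$: every $v^{(r)}$ satisfies the hypothesis and has exactly $t$ occurrences of $c$, hence admits a block decomposition $v^{(r)}=v^{(r)}_0cv^{(r)}_1c\cdots cv^{(r)}_t$, and each step alters exactly one block by one Rule-\textit{SE} step. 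So for every $0\le i\le t$, discarding the steps that leave block $i$ fixed yields $u_i=v^{(0)}_i\equiv_{\textit{MSE}}v^{(m)}_i=:u'_i$, and in particular $w'=u'_0cu'_1c\cdots cu'_t$. If the distinguished occurrence of $c$ in $w=w_1cw_2$ is the $j$-th one ($1\le j\le t$), then $w_1=u_0c\cdots cu_{j-1}$ and $w_2=u_jc\cdots cu_t$; setting $w'_1=u'_0c\cdots cu'_{j-1}$ and $w'_2=u'_jc\cdots cu'_t$ gives $w'=w'_1cw'_2$. Finally, a Rule-\textit{SE} step on a factor extends to one on the containing word, so concatenating the block-derivations $u_i\equiv_{\textit{MSE}}u'_i$ for $0\le i\le j-1$ shows $w_1\equiv_{\textit{MSE}}w'_1$, and likewise $w_2\equiv_{\textit{MSE}}w'_2$.

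The hard part is the key claim of the second paragraph: it is the only place the hypothesis $|u_i|_a,|u_i|_b\ge 1$ is genuinely used, and its crux is the observation that a Rule-\textit{SE} rewriting whose active pair involves $c$ would have to swallow a whole interior block, forcing that block to be a power of a single letter. Everything afterwards is routine bookkeeping — tracking the blocks along a derivation and lifting factor-level rewritings to the whole word.
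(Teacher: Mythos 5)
Your proof is correct and takes essentially the same route as the paper's (which is only sketched there): the hypothesis on the interior blocks forces every application of Rule \textit{SE} to act inside a single block $u_i$, and the conclusion then follows by induction on the length of the derivation together with transitivity of \textit{MSE}-equivalence. Your write-up merely supplies the details (the active-pair contradiction and the lifting of block-level steps back to the whole word) that the paper leaves implicit.
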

\begin{proof}
The proof is straightforward by induction. Note that for the base step, since $|u_i|_a,|u_i|_b\ge 1$ for every $1\le i\le t-1$ and there is only one $c$ between every pair of factors $(u_i,u_{i+1})$, no application of Rule \textit{SE} involving any of the character $c$ can be applied on $w$. Hence, Rule \textit{SE} can only be applied on any of the factors $u_i$. Let $w''=u''_0cu''_1c\cdots cu''_t$ be the word obtained by one application of Rule~\textit{SE} on $w$. It is clear that $u_i\equiv_\textit{MSE}u''_i$ for every $0\le i\le t$, thus the base step holds by some simple further reasoning. The induction step easily follows by the induction hypothesis and transitivity of \textit{MSE}-equivalence.
\end{proof}

\begin{theorem}\label{StrongMButNotMSE}
Suppose $\Sigma=\{a,b,c\}$ and $w,w'\in\Sigma^*$. Let $w=w_0\beta_1w_1\beta_2\cdots \beta_tw_t$ for some $t\ge 1$ such that $w_i\in\Sigma^*$ for every $0\le i\le t$ and
\vspace{-0.3em}\begin{equation*}
\beta_k =\begin{cases}
ab x_k ba &\text{for } 1\leq k \leq t'\\
ba x_k ab &\text{for } t'+1\leq k\leq t,
\end{cases}
\end{equation*} satisfying conditions as in Theorem \ref{Strong2tTransformation}. In addition, suppose that $|x_k|_c\le 1$ for every $1\le k\le t$, $|x_k|_c=1$ for some $1\le k\le t$, and $|w_i|_c\le 1$ for every $0\le i\le t$.

Let $w'$ be the word obtained by (simultaneously) swapping every $ab$ with $ba$ in $w$. Then $w\overset{s}{\underset{(2\cdot t)}{\longrightarrow}}w'$ but $w$ is not \textit{MSE}-equivalent to $w'$.
\end{theorem}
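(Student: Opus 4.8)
I would establish the two parts in turn. That $w\overset{s}{\underset{(2\cdot t)}{\longrightarrow}}w'$ is immediate: $w$, the factors $\beta_k$, and the conditions placed on them are exactly those in the hypothesis of Theorem~\ref{Strong2tTransformation} (with the same $t$), and $w'$ is obtained from $w$ by precisely the simultaneous rewriting described there, so the claim holds by Definition~\ref{DefineStrong2t} (in particular $w\overset{s}{\equiv}_M w'$). The substance of the statement is that $w$ is \emph{not} \textit{MSE}-equivalent to $w'$; I would prove this by contradiction, so assume $w\equiv_{\textit{MSE}}w'$.

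The plan is first to read $w$ as a word punctuated by its occurrences of $c$ and to check that this punctuation is of the type handled by Lemma~\ref{ClosedUnderSE}. From condition~(3) of Theorem~\ref{Strong2tTransformation} together with the hypothesis that $|x_k|_c=1$ for some $k$, one gets $|x_k|_c=1$ for at least one $k\le t'$ and for at least one $k>t'$, so $|w|_c\ge 2$. Since each $\beta_k$ begins with $ab$ or $ba$ and ends with $ba$ or $ab$, and since $|x_k|_c\le 1$ and $|w_i|_c\le 1$, no two occurrences of $c$ in $w$ are adjacent; writing $w=u_0cu_1c\cdots cu_m$ with all $u_i\in\{a,b\}^*$, I would verify that every interior block $u_i$ ($1\le i\le m-1$) contains an $a$ and a $b$: the occurrence of $c$ bounding $u_i$ on the left lies either inside some $x_k$, whence $u_i$ begins with the $c$-free word $x_k''$ followed by the terminal $ba$ or $ab$ of $\beta_k$, or inside some $w_l$ with $l<t$, whence $u_i$ begins with $w_l''$ followed by the initial $ab$ or $ba$ of $\beta_{l+1}$. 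Thus $w$ meets the hypotheses of Lemma~\ref{ClosedUnderSE}. The transformation moves no occurrence of $c$ (each lies inside some $x_k$ or some $w_i$, and these are common to $w$ and $w'$), so $w'$ also has $m$ occurrences of $c$; write $w'=u_0'cu_1'c\cdots cu_m'$. Applying Lemma~\ref{ClosedUnderSE} at each occurrence of $c$ in turn --- each suffix $u_icu_{i+1}c\cdots cu_m$ again satisfies its hypotheses --- forces $u_i\equiv_{\textit{MSE}}u_i'$ for every $i$. Since Rule~\textit{SE} preserves $M$-equivalence with respect to $\{a<b<c\}$, it preserves the subword count $|\cdot|_{ab}$ by Theorem~\ref{PropertiesParikhMat}; hence $|u_i|_{ab}=|u_i'|_{ab}$ for every $i$.

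The contradiction comes from one carefully chosen block. Let $k_0$ be the least index with $|x_{k_0}|_c=1$, write $x_{k_0}=x_{k_0}'\,c\,x_{k_0}''$, and let $U$ be the block of $w$ ending at this particular $c$, with $U'$ the corresponding block of $w'$. By minimality of $k_0$, the occurrence of $c$ immediately to the left of $U$ (if any) lies inside some $w_l$ rather than inside any $x_k$; hence $U=P\,\ell\,x_{k_0}'$, where $\ell$ is the initial factor of $\beta_{k_0}$ (namely $ab$ if $k_0\le t'$ and $ba$ if $k_0>t'$) and $P$ is a concatenation of $c$-free pieces, each of which is either a piece of some $w_i$ or an entire factor $\beta_j$ with $|x_j|_c=0$. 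Passing to $w'$ replaces $\ell$ by its $ab\leftrightarrow ba$ swap $\ell'$ and replaces each such $\beta_j$ occurring in $P$ by $\beta_j'$; but a direct check shows that $\beta_j$ and $\beta_j'$ have the same numbers of $a$'s, of $b$'s and of $ab$-subwords, so these replacements do not change the $ab$-subword count of the block. On the other hand $|r\,ab\,s|_{ab}=|r\,ba\,s|_{ab}+1$ for all words $r,s$, so the remaining replacement of $\ell$ by $\ell'$ changes the count by exactly $\pm1$. Therefore $|U|_{ab}\ne|U'|_{ab}$, contradicting the conclusion of the previous paragraph, and we conclude that $w$ is not \textit{MSE}-equivalent to $w'$.

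The step I expect to be the main obstacle is the last one: identifying precisely which fragments of the block $U$ are altered by the transformation and showing that exactly one of them --- the initial factor of $\beta_{k_0}$ --- affects $|\cdot|_{ab}$, every other altered fragment being a whole $\beta_j$ with $|x_j|_c=0$, which contributes nothing. Choosing $k_0$ minimal is exactly what rules out the one dangerous alternative, namely a terminal factor $ba$ or $ab$ of an earlier $\beta_j$ with $|x_j|_c=1$ lying in $U$ and contributing an offsetting $\mp1$. By contrast, the verification that the $c$-decomposition of $w$ satisfies the hypotheses of Lemma~\ref{ClosedUnderSE} is routine, though it does use the bounds $|x_k|_c\le 1$ and $|w_i|_c\le 1$ essentially.
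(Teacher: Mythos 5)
Your proof is correct, and it rests on the same two pillars as the paper's: Lemma~\ref{ClosedUnderSE} applied to the $c$-decomposition of $w$, followed by a count of $ab$-subwords across a swapped pair of $\beta_{k_0}$. The execution differs in where that count is taken. You apply the lemma at every occurrence of $c$ to get blockwise equivalences $u_i\equiv_{\textit{MSE}}u_i'$ and then compare $|\cdot|_{ab}$ on the maximal $c$-free block ending at the distinguished $c$; this is what forces you to take $k_0$ minimal, so that the only altered fragments in that block are whole factors $\beta_j$ with $|x_j|_c=0$ (which leave $|\cdot|_{ab}$ unchanged) plus the single initial pair of $\beta_{k_0}$. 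The paper instead takes an arbitrary $k_0$ with $|x_{k_0}|_c=1$, applies the lemma once, at the $c$ inside $x_{k_0}$, and compares $|\cdot|_{ab}$ of the full prefixes $u\,ab\,x_{k_0,1}$ and $u'\,ba\,x_{k_0,1}$: every $\beta_j$ with $j<k_0$ lies wholly inside $u$, whether or not $x_j$ contains a $c$, and a whole-factor swap preserves $|\cdot|_{ab}$ (same Parikh vector, same internal $ab$-count), so $|u|_{ab}=|u'|_{ab}$ and the one exposed pair yields the discrepancy of $1$, hence the contradiction. So the minimality device and the block-by-block bookkeeping in your version, though carried out correctly, are avoidable; what they buy is the slightly stronger intermediate fact that all $c$-delimited blocks of $w$ and $w'$ would have to be pairwise \textit{MSE}-equivalent, but that extra information is not needed for the contradiction.
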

\begin{proof}

It suffices to show that $w$ and $w'$ are not \textit{MSE}-equivalent since $w\overset{s}{\underset{(2\cdot t)}{\longrightarrow}}w'$ by Definition \ref{DefineStrong2t}. 

We argue by contradiction. Assume $w$ and $w'$ are \textit{MSE}-equivalent. Since $|x_k|_c\le 1$ for every $1\le k\le t$, $|x_k|_c=1$ for some $1\le k\le t$, and $|w_i|_c\le 1$ for every $0\le i\le t$, it follows that $w$ is in the form stated in Lemma \ref{ClosedUnderSE}. Now, choose \mbox{$1\le k_0\le t$} such that $|x_{k_0}|_c=1$. Without loss of generality, assume $\beta_{k_0}=ab x_{k_0} ba$. We write $w$ in the form of  $u\boldsymbol{ab}\underbrace{x_{{k_0},1}\boldsymbol{c}x_{{k_0},2}}_{x_{k_0}}\boldsymbol{ba}v$ and $w'$ in the form of $u'\boldsymbol{ba}\underbrace{x_{{k_0},1}\boldsymbol{c}x_{{k_0},2}}_{x_{k_0}}\boldsymbol{ab}v'$, where $u,u',v,v'\in\Sigma^*$. Then, by Lemma~\ref{ClosedUnderSE}, $u\boldsymbol{ab}x_{{k_0},1}\equiv_{\textit{MSE}}u'\boldsymbol{ba}x_{{k_0},1}$ and $x_{{k_0},2}\boldsymbol{ba}v\equiv_{\textit{MSE}}x_{{k_0},2}\boldsymbol{ab}v'$.

Since the rewriting of each factor $\beta_k$ preserves the number of occurrences of $ab$ in $w$, it follows that $|u|_{ab}=|u'|_{ab}$. 
Observe that  $|u\boldsymbol{ab}x_{{k_0},1}|_{ab}=|u'\boldsymbol{ba}x_{{k_0},1}|_{ab}+1$, thus $\Psi_{\{a<b<c\}}(u\boldsymbol{ab}x_{{k_0},1})\neq\Psi_{\{a<b<c\}}(u'\boldsymbol{ba}x_{{k_0},1})$. It immediately follows that $u\boldsymbol{ab}x_{{k_0},1}$ and  $u'\boldsymbol{ba}x_{{k_0},1}$ are not strongly $M$\!-equivalent, and thereby they cannot be \textit{MSE}-equivalent. This is a contradiction, therefore we conclude that $w$ and $w'$ are not \textit{MSE}-equivalent.
\end{proof}

\subsection{Strong $\boldsymbol{(3\cdot t)}$ Transformation}

Although Rule $S2\cdot t$ is a generalization of Rule \textit{SE} for the ternary alphabet, it is still far from characterizing every pair of strongly $M$\!-equivalent ternary words (see Proposition \ref{ExampleStrongButNot2t}). In this subsection, we develop another canonical rewriting rule that preserves strong $M$\!-equivalence for the ternary alphabet.

\begin{lemma}\label{LemmaChangeInTopRight}\cite{GT16a}
Suppose $\Sigma=\{a,b,c\}$ and $w,w'\in\Sigma^*$. Assume that $w=xabybaz$ and $w'=xbayabz$ for some $x,y,z\in\Sigma^*$, then the following statements are true:
\begin{enumerate}
\item $|w|_{abc}-|w'|_{abc}=|w|_{cba}-|w'|_{cba}=|y|_c.$
\item $|w|_{bac}-|w'|_{bac}=|w|_{cab}-|w'|_{cab}=-|y|_c.$   
\item $|w|_{acb}-|w'|_{acb}=|w|_{bca}-|w'|_{bca}=0.$
\end{enumerate}
\end{lemma}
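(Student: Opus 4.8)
The plan is to prove the three parts uniformly by establishing a single formula for the effect of the rewriting $xabybaz\mapsto xbayabz$ on an arbitrary length-three subword. For a word $pqr$ over $\{a,b,c\}$ with $p,q,r$ pairwise distinct, I would aim to show
\[
|w|_{pqr}-|w'|_{pqr}=\bigl([\,pq=ab\,]-[\,pq=ba\,]\bigr)\,|y|_r-\bigl([\,qr=ab\,]-[\,qr=ba\,]\bigr)\,|y|_p ,
\]
where $[\,P\,]$ is $1$ if the statement $P$ holds and $0$ otherwise. The six equalities in (1)--(3) then drop out by substituting $pqr\in\{abc,\,acb,\,bac,\,bca,\,cab,\,cba\}$.

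To derive the identity I would use the standard additivity of subword counting under concatenation: for a factorization $W=B_1B_2\cdots B_m$ one has $|W|_{pqr}=\sum|B_1|_{s^{(1)}}|B_2|_{s^{(2)}}\cdots|B_m|_{s^{(m)}}$, the sum ranging over all ways to write $pqr=s^{(1)}s^{(2)}\cdots s^{(m)}$ as an ordered concatenation of (possibly empty) factors. Apply this to the five-factor decompositions $w=x\cdot ab\cdot y\cdot ba\cdot z$ and $w'=x\cdot ba\cdot y\cdot ab\cdot z$, whose first, third and fifth factors coincide. Subtracting, every summand in which the two pivot factors (the second and the fourth) each receive at most one of $p,q,r$ cancels, because $|ab|_\ell=|ba|_\ell$ for every single letter $\ell$. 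Hence only summands that place two of $p,q,r$ inside a single pivot factor can survive. Among those, any pattern forcing a letter equal to $c$ into a pivot factor contributes $0$, since neither $ab$ nor $ba$ contains $c$; this annihilates every "two letters in one pivot, one letter in the other pivot" pattern (the leftover letter is then forced to be $c$) and leaves precisely four patterns, which put $\{p,q\}$ or $\{q,r\}$ inside one pivot while the remaining letter lands in $x$, $y$, or $z$. Summing these four contributions and cancelling the $|x|$- and $|z|$-terms against each other yields the displayed formula.

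The last step, the six substitutions, is then immediate: $pq=ab$ in $abc$ gives $|y|_c$; $qr=ba$ in $cba$ gives $|y|_c$ as well; $pq=ba$ in $bac$ and $qr=ab$ in $cab$ each give $-|y|_c$; and $acb$, $bca$ meet none of $ab,ba$ in the relevant positions, giving $0$. The only delicate point is the bookkeeping in the middle paragraph---checking that every "mixed" distribution pattern genuinely cancels, and invoking the absence of $c$ from $ab$ and $ba$ exactly where a pivot factor would otherwise have to absorb a $c$---after which everything is routine. Alternatively, one could compute $\Psi_\Delta(w)-\Psi_\Delta(w')=\Psi_\Delta(x)\bigl[\Psi_\Delta(ab)\Psi_\Delta(y)\Psi_\Delta(ba)-\Psi_\Delta(ba)\Psi_\Delta(y)\Psi_\Delta(ab)\bigr]\Psi_\Delta(z)$ for each ordered alphabet $\Delta$ with underlying alphabet $\{a,b,c\}$ that exhibits the desired length-three subword as a matrix entry, but the combinatorial route avoids repeating a $4\times 4$ matrix product for several orderings.
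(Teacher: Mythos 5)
Your argument is correct, and it is in fact more than the paper offers: Lemma~\ref{LemmaChangeInTopRight} is imported here from \cite{GT16a} with no proof given in this paper, so there is no in-paper argument to compare against; your proposal supplies a valid self-contained proof. The block-decomposition bookkeeping does go through as you claim: writing $w=x\cdot ab\cdot y\cdot ba\cdot z$ and $w'=x\cdot ba\cdot y\cdot ab\cdot z$, every distribution placing at most one of $p,q,r$ in each pivot cancels since $|ab|_{\ell}=|ba|_{\ell}$ for single letters, and any distribution putting a two-letter piece in a pivot forces that piece to lie in $\{ab,ba\}$ (else both counts vanish), hence forces the leftover letter to be $c$, which kills the ``two in one pivot, one in the other pivot'' patterns. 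The four surviving patterns give
\[
|w|_{pqr}-|w'|_{pqr}=\bigl([\,pq=ab\,]-[\,pq=ba\,]\bigr)\bigl(|y|_c+|z|_c\bigr)-\bigl([\,pq=ab\,]-[\,pq=ba\,]\bigr)|z|_c
+\bigl([\,qr=ab\,]-[\,qr=ba\,]\bigr)|x|_c-\bigl([\,qr=ab\,]-[\,qr=ba\,]\bigr)\bigl(|x|_c+|y|_c\bigr),
\]
so the $|x|_c$ and $|z|_c$ terms indeed cancel and your displayed formula follows; the six substitutions then give exactly statements (1)--(3). The alternative route you mention (computing $\Psi_\Delta(ab)\Psi_\Delta(y)\Psi_\Delta(ba)-\Psi_\Delta(ba)\Psi_\Delta(y)\Psi_\Delta(ab)$ for the orderings of Remark~\ref{SufficientOrder}) is the style of computation one would expect in \cite{GT16a}, so either write-up is acceptable; your combinatorial version has the advantage of treating all six subwords at once.
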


\begin{theorem}\label{AnotherStrongM}
Suppose $\Sigma=\{a,b,c\}$ and $w,w'\in\Sigma^*$. Suppose $w$ contains $t\ge 1$ factors of the form
\begin{equation*}
\beta_k =\begin{cases}
ab x_k ba &\text{for } 1\leq k \leq t'\\
bc x_k cb &\text{for } t'+1\leq k\leq t''\\
ca x_k ac &\text{for } t''+1\leq k\leq t\\
\end{cases}
\end{equation*}\\
such that the following conditions hold:
\begin{enumerate}
\item $x_k\in\Sigma^*$ for $1\le k\le t$;
\item The first and last two characters in $\beta_m$ do not overlap with the first and last two characters in $\beta_n$ whenever $m\neq n$;
\item $\sum_{k=1}^{t'} |x_k|_c=\sum_{k=t'+1}^{t''} |x_k|_a=\sum_{k=t''+1}^{t} |x_k|_b$.
\end{enumerate}
If $w'$ is obtained from $w$ by (simultaneously) rewriting $\beta_k$ into $\beta_k'$ for every $1\le k\le t$, where
\begin{equation*}
\beta_k' =\begin{cases}
ba x_k ab &\text{for } 1\leq k \leq t'\\
cb x_k bc &\text{for } t'+1\leq k\leq t''\\
ac x_k ca &\text{for } t''+1\leq k\leq t\\
\end{cases}
\end{equation*}\\
then $w$ and $w'$ are strongly $M$\!-equivalent.
\end{theorem}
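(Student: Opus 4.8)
The plan is to mirror the proof of Theorem~\ref{Strong2tTransformation}. By Remark~\ref{SufficientOrder} it suffices to establish $w\equiv_Mw'$ with respect to each of the ordered alphabets $\{a<b<c\}$, $\{b<a<c\}$, and $\{a<c<b\}$, which I would handle as three cases. The uniform idea is that in any ordering of $\{a,b,c\}$ exactly one of the pairs $\{a,b\}$, $\{b,c\}$, $\{c,a\}$ is non-consecutive; the group of $\beta_k$'s built from that non-consecutive pair can be rewritten using only Rule~$E1$ (transposing an adjacent non-consecutive pair of letters preserves $M$-equivalence), and the two remaining groups are then rewritten by a single application of Rule~$E2\cdot t$, whose balance hypothesis will turn out to be exactly one of the equalities packed into condition~(3).

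Carrying this out for $\{a<b<c\}$: here $a,c$ are non-consecutive, so I would first rewrite each group-three factor $ca\,x_k\,ac$ into $ac\,x_k\,ca$ by two applications of Rule~$E1$, getting $w''$ with $w\equiv_Mw''$. The group-one and group-two factors still occur in $w''$ as factors of the forms $ab\,x_k\,ba$ and $bc\,x_k\,cb$ (with the interiors $x_k$ possibly internally reshuffled, but with the letter-counts used below unchanged), and simultaneously swapping their leading/trailing pairs is precisely Rule~$E2\cdot t$ under $\{a<b<c\}$ with $\alpha_k=a$, $\overbar{\alpha_k}=c$ for $1\le k\le t'$ and $\alpha_k=c$, $\overbar{\alpha_k}=a$ for $t'+1\le k\le t''$; its balance hypothesis $\sum_{k=1}^{t'}|x_k|_c=\sum_{k=t'+1}^{t''}|x_k|_a$ is the first equality of condition~(3). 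Hence $w''\equiv_Mw'$ and so $w\equiv_Mw'$. The cases $\{b<a<c\}$ and $\{a<c<b\}$ are identical up to relabelling: under $\{b<a<c\}$ the non-consecutive pair is $\{b,c\}$, group two is disposed of by Rule~$E1$, and groups one and three form the $(2\cdot t)$ transformation with balance hypothesis $\sum_{k=1}^{t'}|x_k|_c=\sum_{k=t''+1}^{t}|x_k|_b$; under $\{a<c<b\}$ the non-consecutive pair is $\{a,b\}$, group one is disposed of by Rule~$E1$, and groups two and three form the $(2\cdot t)$ transformation with balance hypothesis $\sum_{k=t'+1}^{t''}|x_k|_a=\sum_{k=t''+1}^{t}|x_k|_b$. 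In each case the needed equality is supplied by condition~(3), so $w\equiv_Mw'$ in all three orderings, and Remark~\ref{SufficientOrder} yields $w\overset{s}{\equiv}_Mw'$.

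The step I expect to need the most care is the bookkeeping that makes the reduction legitimate: after the Rule~$E1$ rewrites that remove the non-consecutive group one must check (i) that the leading and trailing pairs of the surviving $\beta_k$'s still occur as mutually non-overlapping factors of $w''$ — which follows from condition~(2), since an $E1$-rewrite of one group's pairs never disturbs another group's distinguished characters — and (ii) that the counts $|x_k|_a$, $|x_k|_b$, $|x_k|_c$ entering the balance hypothesis are the same in $w''$ as in $w$ — which holds because each $E1$-rewrite only transposes two adjacent distinct letters and condition~(2) prevents such a transposition from splitting the interior of a surviving factor across its distinguished characters. A more computational alternative, and the natural use of Lemma~\ref{LemmaChangeInTopRight}, would be to perform the $t$ swaps one at a time and track only the single length-three subword relevant to the ordering at hand ($abc$, $bac$, or $acb$): Lemma~\ref{LemmaChangeInTopRight} together with its two cyclic relabellings gives the increment of that count under each swap as $\pm|x_k|$ in the appropriate letter, and summing these and applying condition~(3) shows the net change vanishes — at the cost of the analogous verification that the relevant letter-count of each $x_k$ is unaffected by the other swaps.
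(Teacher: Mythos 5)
Your proof is correct, but it takes a genuinely different route from the paper's. The paper does not reduce to Rule $E1$ and Rule $E2\cdot t$ at all: after observing that all subword counts of length at most two are trivially preserved, it performs the $t$ factor rewritings one at a time and uses Lemma~\ref{LemmaChangeInTopRight} to record, at each step, the change $\Omega_k$ in the triple $\left(|\cdot|_{abc},|\cdot|_{bac},|\cdot|_{acb}\right)$; since the interior at step $k$ has the same Parikh vector as $x_k$, condition~(3) forces $\sum_{k=1}^{t}\Omega_k=(0,0,0)$, and Remark~\ref{SufficientOrder} finishes. Your main argument instead mirrors the proof of Theorem~\ref{Strong2tTransformation}: for each of the three orderings you eliminate the group built on the non-consecutive pair by Rule $E1$ and absorb the two remaining groups into a single (relabelled) application of Rule $E2\cdot t$, whose balance hypothesis is exactly one of the equalities packed into condition~(3); your bookkeeping points --- that the surviving distinguished pairs remain non-overlapping in the intermediate word and that the interior letter counts are unchanged, both via condition~(2) --- are precisely the checks needed and are argued correctly, the only unstated (and trivial) point being the degenerate case where a surviving group is empty, in which case condition~(3) makes the other sum vanish or no $E2$ application is needed. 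As for what each approach buys: yours keeps the treatment uniform with Theorem~\ref{Strong2tTransformation} and exhibits every strong $(3\cdot t)$ transformation, ordering by ordering, as Rule $E1$ plus a $(2\cdot t)$ transformation, but it leans on the correctness of Atanasiu's Rule $E2\cdot t$, which this paper cites without proof, and on intermediate-word bookkeeping; the paper's counting argument is self-contained modulo Lemma~\ref{LemmaChangeInTopRight}, avoids any intermediate normal form, and its counter cancellation anticipates the $\alpha\beta$-transformation and counter machinery of Section~5. Note that the ``more computational alternative'' you sketch at the end is essentially the paper's own proof.
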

\begin{proof}
Let $u\in\Sigma^*$ be such that $|u|_a\le 1$ for all $a\in\Sigma$ and $|u|\le 2$. Then, it is obvious that $|w|_u$=$|w'|_u$. By Remark \ref{SufficientOrder}, it remains to be shown that $|w|_v=|w'|_v$ for every $v\in\{abc,bac,acb\}$.

Suppose $w=w_0\rightarrow w_1\rightarrow\cdots w_t=w'$. Let $w_k$ be the word obtained from $w$ by rewriting each $\beta_i$ into $\beta_i'$ for $1\le i\le k$. For each $1\le k\le t$, let $\alpha_k$ be the factor in $w_{k-1}$ that occupies the same position as the one occupied by $\beta_k$ in $w$. Then,
\begin{equation*}
\alpha_k =\begin{cases}
ab y_k ba &\text{for } 1\leq k \leq t'\\
bc y_k cb &\text{for } t'+1\leq k\leq t''\\
ca y_k ac &\text{for } t''+1\leq k\leq t\\
\end{cases}
\end{equation*}\\
for some $y_k\in\Sigma^*$ such that $\Psi(y_k)=\Psi(x_k)$ for all $1\le k\le t$. Now, let
\begin{equation*}
\alpha_k' =\begin{cases}
ba y_k ab &\text{for } 1\leq k \leq t'\\
cb y_k bc &\text{for } t'+1\leq k\leq t''\\
ac y_k ca &\text{for } t''+1\leq k\leq t.\\
\end{cases}
\end{equation*}\\
Then, $w_k$ is the word obtained from $w_{k-1}$ by rewriting $\alpha_k$ into $\alpha_k'$. Let $\Omega_k$ be the triplet $(|w_k|_{abc}-|w_{k-1}|_{abc},|w_k|_{bac}-|w_{k-1}|_{bac},|w_k|_{acb}-|w_{k-1}|_{acb})$. Then, $\sum\limits_{k=1}^{t}\Omega_k=(|w'|_{abc}-|w|_{abc},|w'|_{bac}-|w|_{bac},|w'|_{acb}-|w|_{acb})$. By Lemma~\ref{LemmaChangeInTopRight}, it holds that
\begin{equation*}
\Omega_k =\begin{cases}
(|y_k|_c,-|y_k|_c,0) &\text{for } 1\leq k \leq t'\\
(-|y_k|_a,0,|y_k|_a) &\text{for } t'+1\leq k\leq t''\\
(0,|y_k|_b,-|y_k|_b) &\text{for } t''+1\leq k\leq t.\\
\end{cases}
\end{equation*}\\
Note that $\sum\limits_{k=1}^{t}\Omega_k$
\begin{equation*}
\begin{split}
={}&\left(\sum\limits_{k=1}^{t'}|y_k|_c-\sum\limits_{k=t'+1}^{t''}|y_k|_a,\sum\limits_{k=t''+1}^{t}|y_k|_b-\sum\limits_{k=1}^{t'}|y_k|_c,\sum\limits_{k=t'+1}^{t''}|y_k|_a-\sum\limits_{k=t''+1}^{t}|y_k|_b\right)\\
=&{}\left(\sum\limits_{k=1}^{t'}|x_k|_c-\sum\limits_{k=t'+1}^{t''}|x_k|_a,\sum\limits_{k=t''+1}^{t}|x_k|_b-\sum\limits_{k=1}^{t'}|x_k|_c,\sum\limits_{k=t'+1}^{t''}|x_k|_a-\sum\limits_{k=t''+1}^{t}|x_k|_b\right)
\end{split}
\end{equation*}
because $\Psi(y_k)=\Psi(x_k)$ for all $1\le k\le t$.

By condition (3), $\sum\limits_{k=1}^{t}\Omega_k=(0,0,0)$. Thus $|w|_{abc}=|w'|_{abc}$, $|w|_{bac}=|w'|_{bac}$, and $|w|_{acb}=|w'|_{acb}$, concluding that $w$ and $w'$ are strongly $M$\!-equivalent.
\end{proof}

\begin{definition}
Suppose $\Sigma=\{a,b,c\}$, $w,w'\in {\Sigma}^*$, and $t\ge 1$. We say that the transformation of $w$ into $w'$ is a \textit{strong $(3\cdot t)$ transformation}, denoted by $w\overset{s}{\underset{(3\cdot t)}{\longrightarrow}}w'$, if and only if $w'$ is obtained from $w$ as in Theorem \ref{AnotherStrongM} and the number of factors $\beta_k$ involved is $t$. The rewriting rule used on $w$ to obtain $w'$ is addressed as Rule $S3\cdot t$.
\end{definition}

\subsection{Mutually Exclusivity of Strong $\boldsymbol{(2\cdot t)}$ and Strong $\boldsymbol{(3\cdot t)}$ Transformations}

The following propositions should put to rest any doubts on why Rule $S2\cdot t$ and Rule $S3\cdot t$ are both needed in the aim to  characterize strong $M$\!-equivalence for the ternary alphabet. 

\begin{proposition}\label{ExampleStrongButNot3t}
Suppose $\Sigma=\{a,b,c\}$.  Then, there exist infinitely many pairs of words $w$ and $w'$ in $\Sigma^*$ such that $w'$ is obtained from $w$ by some strong $(2\cdot t)$ transformation but cannot by finitely many strong $(3\cdot t)$ transformations.
\end{proposition}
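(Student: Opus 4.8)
The plan is to exhibit an explicit infinite family, obtained by generalizing the counterexample word of Example~\ref{histexam}. For each integer $n\ge 3$, take $w=w_n:=b(abcb)^n ab$ and let $w'=w_n'$ be the word produced from $w$ by the strong $(2\cdot 2)$ transformation whose two factors are $\beta_1=abcba$ (the first block $abcb$ followed by the leading $a$ of the second block, written as $ab\,x_1\,ba$ with $x_1=c$) and $\beta_2=babcbab$ (the trailing $b$ of the $(n-1)$-st block, followed by the last block $abcb$ and the final $ab$, written as $ba\,x_2\,ab$ with $x_2=bcb$). For $n=3$ this is exactly Example~\ref{histexam}.

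First I would check that this really is a strong $(2\cdot 2)$ transformation in the sense of Definition~\ref{DefineStrong2t}, i.e.\ that the hypotheses of Theorem~\ref{Strong2tTransformation} hold: condition~(3) holds because $|x_1|_c=1=|x_2|_c$ (here $t'=1$, $t=2$), and condition~(2) holds because for $n\ge 3$ the marked occurrences of $ab$ and $ba$ inside $\beta_1$ and $\beta_2$ occupy the pairwise disjoint position sets $\{2,3\},\{5,6\}$ and $\{4n-3,4n-2\},\{4n+2,4n+3\}$ (disjoint since $4n-3\ge 9$). Clearly $w_n'\ne w_n$ (they already differ in positions $2,3$), and the words $w_n$ have pairwise distinct lengths $4n+3$; so this construction will furnish infinitely many distinct pairs once the next step is done.

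The crux is to prove that \emph{no} strong $(3\cdot t)$ transformation can be applied to $w_n$ at all; then $w_n'$ cannot be reached from $w_n$ by finitely many strong $(3\cdot t)$ transformations, since the first step of any such chain would already be impossible (and $w_n'\ne w_n$). To this end I would record three facts about $w_n$, obtained by reading off the positions of its letters: (a)~every $c$ in $w_n$ is immediately followed by $b$, hence $w_n$ has no factor of the form $ca\,x\,ac$; (b)~between any occurrence of the factor $ab$ and any later occurrence of the factor $ba$ in $w_n$ there is at least one $c$, hence every factor $ab\,x\,ba$ of $w_n$ has $|x|_c\ge 1$; (c)~symmetrically, between any occurrence of $bc$ and any later occurrence of $cb$ there is at least one $a$, hence every factor $bc\,x\,cb$ of $w_n$ has $|x|_a\ge 1$. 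Assuming these, suppose a strong $(3\cdot t)$ transformation with $t\ge 1$ factors (notation as in Theorem~\ref{AnotherStrongM}) applied to $w_n$. By~(a) there are no $ca\,x_k\,ac$ factors, so $t''=t$ and the common value of the three equal sums in condition~(3) is the empty sum $\sum_{k=t''+1}^{t}|x_k|_b=0$; then $\sum_{k=1}^{t'}|x_k|_c=0$ forces $t'=0$ by~(b), and $\sum_{k=t'+1}^{t''}|x_k|_a=0$ forces $t''=0$ by~(c), whence $t=0$, a contradiction.

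The only place needing care is the bookkeeping behind (a)--(c). Writing the $i$-th block $abcb$ of $w_n$ as positions $4i-2,4i-1,4i,4i+1$ (carrying $a,b,c,b$), together with the initial $b$ in position $1$ and the final $ab$ in positions $4n+2,4n+3$, one reads off that the occurrences of the factor $ab$ are exactly $(4i-2,4i-1)$ for $1\le i\le n$ and $(4n+2,4n+3)$; of $ba$ exactly $(1,2)$ and $(4i+1,4i+2)$ for $1\le i\le n$; of $bc$ exactly $(4i-1,4i)$; and of $cb$ exactly $(4i,4i+1)$. Facts (a)--(c) then follow by inspecting which pairs can serve as the two ends of a factor $ca\cdots ac$, $ab\cdots ba$, or $bc\cdots cb$, and noting that a $c$ (resp.\ an $a$) always lies strictly in between. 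I expect this step to be the main obstacle only in the sense of demanding attention to detail; conceptually it is routine.
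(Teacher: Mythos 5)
Your proposal is correct and follows essentially the same strategy as the paper's proof: exhibit an explicit infinite family on which some strong $(2\cdot t)$ transformation acts but Rule $S3\cdot t'$ cannot be applied anywhere on the source word, so a sequence of strong $(3\cdot t)$ transformations cannot even begin (and the target differs from the source). The paper uses the witnesses $w=(abcbabacab)^t$ with a strong $(2\cdot 2t)$ transformation rather than your $b(abcb)^n ab$ with a strong $(2\cdot 2)$ transformation, but the argument is the same in spirit, and your verification of the hypotheses of Theorem \ref{Strong2tTransformation} together with facts (a)--(c) blocking every factor type required by Theorem \ref{AnotherStrongM} is sound.
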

\begin{proof}
Fix an integer $t\ge 1$. Consider the words $w=(abcbabacab)^t$ and $w'=(bacababcba)^t$. Then,
\begin{equation*}
w=(\boldsymbol{ab}c\boldsymbol{ba}\textbf{ba}c\textbf{ab})^t\overset{s}{\underset{(2\cdot 2t)}{\longrightarrow}}(\boldsymbol{ba}c\boldsymbol{ab}\textbf{ab}c\textbf{ba})^t=w'. 
\end{equation*}
However, observe that for any positive integer $t'$, Rule $S3\cdot t'$ cannot be applied anywhere on $w$. Thus, the conclusion holds.
\end{proof}

\begin{proposition}\label{ExampleStrongButNot2t}
Suppose $\Sigma=\{a,b,c\}$.  Then, there exist infinitely many pairs of words $w$ and $w'$ in $\Sigma^*$ such that $w'$ is obtained from $w$ by some strong $(3\cdot t)$ transformation but cannot by finitely many strong $(2\cdot t)$ transformations.
\end{proposition}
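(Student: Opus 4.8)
The plan is to imitate the proof of Proposition~\ref{ExampleStrongButNot3t} with the roles of the two rules interchanged: for every integer $n\ge 1$ I would exhibit words $w=w_n$ and $w'=w_n'$ with $w\overset{s}{\underset{(3\cdot t)}{\longrightarrow}}w'$ for a suitable $t$, and such that \emph{no} strong $(2\cdot s)$ transformation whatsoever can be applied to $w$. Since $w\ne w'$, this already gives that $w'$ is unreachable from $w$ by finitely many strong $(2\cdot s)$ transformations, which is what the proposition asks. Letting $n$ range over the positive integers supplies the ``infinitely many pairs''.

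For the easy half I would build $w_n$ out of $n$ pairwise separated ``cyclic triples'': each triple contributes one factor $ab\,x\,ba$, one factor $bc\,x\,cb$, and one factor $ca\,x\,ac$, the fillers chosen so that within each triple the number of $c$'s in the first filler equals the number of $a$'s in the second equals the number of $b$'s in the third (a common value of $1$ per triple suffices). Then conditions~(1)--(3) of Theorem~\ref{AnotherStrongM} hold for the collection of all $3n$ factors, so Theorem~\ref{AnotherStrongM} yields $w_n\overset{s}{\underset{(3\cdot 3n)}{\longrightarrow}}w_n'$, where $w_n'$ is obtained by swapping $ab\leftrightarrow ba$, $bc\leftrightarrow cb$, and $ca\leftrightarrow ac$ at the $3n$ marked sites.

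The real work is to show Rule $S2\cdot s$ cannot be applied to $w_n$. By Definition~\ref{DefineStrong2t} and Theorem~\ref{Strong2tTransformation}, any application of $S2\cdot s$ uses one unordered letter pair $\{\alpha,\beta\}$ (third letter $\gamma$) together with pairwise non-overlapping factors, each either of the form $\alpha\beta\,y\,\beta\alpha$ (``type~I'') or of the form $\beta\alpha\,y\,\alpha\beta$ (``type~II''), subject to the condition that the total number of $\gamma$'s in the type~I middles equals the total number of $\gamma$'s in the type~II middles. I would arrange the construction so that, for each of the three pairs, (i)~$w_n$ contains no type~II factor at all --- ensured by placing every occurrence of the digram $\beta\alpha$ to the right of every occurrence of the digram $\alpha\beta$, so no factor $\beta\alpha\,y\,\alpha\beta$ can close --- and (ii)~$w_n$ contains no type~I factor with $\gamma$-free middle, which I would verify by exhibiting the maximal $\gamma$-free factors of $w_n$ and checking that none of them contains a factor of the form $\alpha\beta\,y\,\beta\alpha$. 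Given (i) and (ii), a putative $S2\cdot s$ application for $\{\alpha,\beta\}$ would consist only of type~I factors, so condition~(3) of Theorem~\ref{Strong2tTransformation} forces the total $\gamma$-count of their middles to be $0$, hence each such middle is $\gamma$-free, contradicting~(ii). Running this over all three pairs shows that no strong $(2\cdot s)$ transformation applies to $w_n$, which finishes the proof.

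The main obstacle is making (i) and (ii) hold simultaneously for all three pairs while $w_n$ still carries a genuine strong $(3\cdot 3n)$ transformation. The tension is that an $ab\cdots ba$ factor must end in the digram $ba$, a $bc\cdots cb$ factor in $cb$, and a $ca\cdots ac$ factor in $ac$ --- precisely the ``backward'' digrams that could start a type~II factor or sit inside a $\gamma$-free type~I factor. One has to choose the order of the three kinds of bracket within a triple, the fillers, and any padding between triples so that each backward digram is trapped to the right of the matching forward digrams and no short $\gamma$-free bracket is ever produced, and then confirm that concatenating the $n$ triples spoils none of this. Pinning down the maximal $\gamma$-free factors and excluding brackets that straddle two triples is the delicate, case-heavy bookkeeping; once a concrete $w_n$ is fixed, both checks reduce to inspecting finitely many digram positions.
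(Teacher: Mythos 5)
Your overall strategy is the same as the paper's: produce, for infinitely many parameters, a word $w$ that admits a strong $(3\cdot t)$ transformation into some $w'\neq w$ but on which no Rule $S2\cdot s$ can be applied at all. The paper does this with a single concrete core word, $w=a^{m-1}\,ab\,c\,ba\,bc\,a\,cb\,ca\,b\,ac\,c^{m-1}$ (one ``cyclic triple''; the infinitude comes from the padding exponent $m$), exhibits the strong $(3\cdot 3)$ transformation, and then verifies that Rule $S2\cdot t$ cannot be applied anywhere on $w$. Your proposal, by contrast, never produces a witness word: everything rests on being able to arrange your conditions (i) and (ii), and you explicitly defer exactly that step as ``delicate, case-heavy bookkeeping''. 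As it stands, the proof has not been carried out.

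Moreover, the device you chose to get infinitely many pairs --- $n$ pairwise separated cyclic triples --- is incompatible with your own condition (i). Each triple contains an $ab\cdots ba$ bracket, so for $n\ge 2$ the closing digram $ba$ of the first triple lies to the left of the opening digram $ab$ of the second triple; hence a factor of the form $ba\,y\,ab$ does close, and ``no type~II factor at all'' is unattainable for the pair $\{a,b\}$ (and symmetrically for the other two pairs). Even for $n=1$ with the natural fillers $c,a,b$ --- which is precisely the paper's core word $abcbabcacbcabac$ --- type~II factors are present, e.g.\ $ba\,bcacbc\,ab$; what actually blocks Rule $S2\cdot t$ there is not the absence of such factors but the interplay of the non-overlap requirement with the balance condition (3) of Theorem \ref{Strong2tTransformation}: every admissible selection of disjoint digram pairs leaves the third-letter counts unbalanced. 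So your sufficient criterion is both unverified and, in your multi-triple design, unachievable, and the argument you would need in its place is exactly the case analysis you postponed. To repair the proof, either follow the paper (one triple, infinitude via the padding $a^{m-1}\cdots c^{m-1}$, then the finite digram/overlap/count check for the three letter pairs) or replace (i) by a genuine count-and-overlap argument for an explicitly specified $w_n$.
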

\begin{proof}
Fix an integer $m\ge 1$. Consider the words $w=a^mbcbabcacbcabac^m$ and $w'=a^{m-1}bacabcbabcacbcac^{m-1}$. Then,
\begin{equation*}
w=a^{m-1}\boldsymbol{ab}c\boldsymbol{ba}\textbf{bc}a\textbf{cb}\boldsymbol{ca}b\boldsymbol{ac}c^{m-1}\overset{s}{\underset{(3\cdot 3)}{\longrightarrow}}a^{m-1}bacabcbabcacbcac^{m-1}=w'.
\end{equation*}
However, observe that for any positive integer $t$, Rule $S2\cdot t$ cannot be applied anywhere on $w$. Thus, the conclusion holds.
\end{proof}

\section{Irreducible Strong ($2\cdot t$) Transformation}
In this section, we study strong $(2\cdot t)$ transformations in terms of their  decomposability into simpler ones.

\begin{definition}
Suppose $\Sigma=\{a,b,c\}$ and $w,w'\in\Sigma^*$, where $w\overset{s}{\underset{(2\cdot t)}{\longrightarrow}}w'$. We say that the strong ($2\cdot t$) transformation of $w$ into $w'$ is \textit{reducible} if and only if there exists $w''\in\Sigma^*$ such that $w\overset{s}{\underset{(2\cdot t_1)}{\longrightarrow}}w''\overset{s}{\underset{(2\cdot t_2)}{\longrightarrow}}w'$ for some $t_1,t_2>0$ with $t_1+t_2=t$. Otherwise, we say that the transformation is \textit{irreducible}.
\end{definition}

The following shows that every strong ($2\cdot t$) transformation is made up of irreducible ones. One can easily prove it by induction.
\begin{remark}
Suppose $\Sigma=\{a,b,c\}$ and $w,w'\in\Sigma^*$, where $w\overset{s}{\underset{(2\cdot t)}{\longrightarrow}}w'$. Then, there exist $w_i\in\Sigma^*,0\le i\le n$ such that $w=w_0\overset{s}{\underset{(2\cdot t_1)}{\longrightarrow}}w_1\overset{s}{\underset{(2\cdot t_2)}{\longrightarrow}}\cdots \overset{s}{\underset{(2\cdot t_n)}{\longrightarrow}}w_n=w'$, where $w_{i-1}\overset{s}{\underset{(2\cdot t)}{\longrightarrow}}w_i$ is irreducible for each $1\le i\le n$ and $\sum\limits_{i=1}^{n}t_i=t$.
\end{remark}

Next, we develop a characterization of irreducible strong ($2\cdot t$) transformations. We first establish a factor-independent description of strong ($2\cdot t$) transformations due to Remark \ref{FactorsNotUnique}.

Suppose $\Sigma$ is a ternary alphabet and $w,w'\in\Sigma^*$, where  $w\overset{s}{\underset{(2\cdot t)}{\longrightarrow}}w'$. Then $w'$ is obtained from $w$ by rewriting $2t$ non-overlapping pairs of characters as in Theorem \ref{Strong2tTransformation}. Let $\mu_i,1\le i\le 2t$ enumerate, from left to right, each of those pair of characters in $w$. Let $\mu_1$ be $ab$ for some $a,b\in\Sigma$. Then, for all $1\le i\le 2t$, $\mu_i$ is either $ab$ or $ba$. For each occurrence of $\mu_i$, assuming that $w=x_i\mu_i y_i$ for some $x_i,y_i\in\Sigma^*$, we associate to it an ordered pair $(p_i,q_i)$, defined as the following:
\begin{equation}\tag{$\ast$}
p_i=\begin{cases}
-1 &\text{if }\mu_i=ab\\
1 &\text{if }\mu_i=ba,
\end{cases}\,\hspace{1.5em}
q_i=\begin{cases}
-|y_i|_c &\text{if }\mu_i=ab\\
|y_i|_c &\text{if }\mu_i=ba,
\end{cases}
\end{equation}
where $c\in\Sigma\backslash\{a,b\}$. Note that $p_i$ (respectively $q_i$) accounts for the change in the number of occurrences of $ab$ (respectively $abc$) in $w$ when the characters in $\mu_i$ are interchanged.

Note that the definition of $(p_i,q_i)$ in $(\ast)$ can be extended to the case whenever one word is obtained from another by rewriting $2t$ non-overlapping pairs of characters, where for some $a,b\in\Sigma$, each pair is either $ab$ or $ba$. We will use this definition of $(p_i,q_i)$ in Theorem \ref{Characterize2t}.
\begin{example}
Consider $w=\boldsymbol{ac}b\boldsymbol{ca}\boldsymbol{ca}b\boldsymbol{ac}\overset{s}{\underset{(2\cdot 2)}{\longrightarrow}}\boldsymbol{ca}b\boldsymbol{ac}\boldsymbol{ac}b\boldsymbol{ca}=w'$. Then, enumerating from left to right, we have $\mu_1=(-1,-2)$, $\mu_2=(1,1)$, $\mu_3=(1,1)$, and $\mu_4=(-1,0)$.
\end{example}

\begin{remark}\label{RelateStrong2tAnd2t}
Suppose $\Sigma$ is a ternary alphabet, $w,w'\in\Sigma^*$ and $t\ge 1$. Suppose $w'$ is obtained from $w$ by rewriting $2t$ non-overlapping pairs of characters, where for some $a,b\in\Sigma$, each pair is either $ab$ or $ba$. Then, $w\overset{s}{\underset{(2\cdot t)}{\longrightarrow}}w'$ if and only if $w\underset{(2\cdot t)}{\longrightarrow}w'$ with respect to every  ordered ternary alphabet that has $a$ and $b$ as consecutive characters in it.
\end{remark}

The following theorem is analogous to the results established in the study of irreducible ($2\cdot t$) transformations in \cite{wT16}. In fact, due to Remark $\ref{RelateStrong2tAnd2t}$, one can verify that the former is a direct corollary of the latter. Thus the proof is not presented explicitly in this paper.
\begin{theorem}\label{Characterize2t}
Suppose $\Sigma$ is a ternary alphabet, $w,w'\in\Sigma^*$ and $t\ge 1$. Suppose $w'$ is obtained from $w$ by rewriting $2t$ non-overlapping pairs of characters, where for some $a,b\in\Sigma$, each pair is either $ab$ or $ba$. Let $(p_i,q_i)$ be the ordered pair defined in $(\ast)$ for every $1\le i\le 2t$. Then
\begin{equation*}
w\overset{s}{\underset{(2\cdot t)}{\longrightarrow}}w'\text{ if and only if }\sum\limits_{i=1}^{2t}p_i=\sum\limits_{i=1}^{2t}q_i=0.
\end{equation*}
Furthermore, if $w\overset{s}{\underset{(2\cdot t)}{\longrightarrow}}w'$, then the transformation is reducible if and only if there exists a non-empty $I\varsubsetneq\{1,2,\ldots,2t\}$ such that $\sum_{i\in I}p_i=\sum_{i\in I}q_i=0$.
\end{theorem}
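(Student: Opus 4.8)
The plan is to obtain both equivalences directly from Theorem~\ref{Strong2tTransformation} together with elementary bookkeeping on the ordered pairs $(p_i,q_i)$; via Remark~\ref{RelateStrong2tAnd2t} this is precisely what lets the statement be read off from the treatment of ordinary $(2\cdot t)$ transformations in \cite{wT16}, and it is that route I would actually cite. Throughout I write $c$ for the letter of $\Sigma$ outside $\{a,b\}$ and use repeatedly that interchanging an $ab$ with a $ba$ changes neither the number nor the positions of the $c$'s in a word.

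For the characterization $w\overset{s}{\underset{(2\cdot t)}{\longrightarrow}}w'\iff\sum_i p_i=\sum_i q_i=0$, the forward implication follows from a witnessing family of factors $\beta_1,\dots,\beta_t$: the $2t$ pairs $\mu_1,\dots,\mu_{2t}$ are partitioned among the $t$ factors, each $\beta_k$ supplying one pair equal to $ab$ and one equal to $ba$, so $\sum_i p_i=t-t=0$; regrouping $\sum_i q_i$ by factors and cancelling the $c$'s lying strictly to the right of each $\beta_k$, the two pairs of $\beta_k$ together contribute $-|x_k|_c$ for $1\le k\le t'$ and $+|x_k|_c$ for $t'<k\le t$, so $\sum_i q_i=0$ is exactly condition~(3). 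Conversely, if $\sum_i p_i=0$ then precisely $t$ of the $\mu_i$ read $ab$ and $t$ read $ba$; pairing them off by an arbitrary bijection yields $t$ factors of the shape in Theorem~\ref{Strong2tTransformation} (condition~(2) being automatic, as the $\mu_i$ occupy disjoint positions), and for any such pairing $\sum_i q_i=\sum_{k=t'+1}^{t}|x_k|_c-\sum_{k=1}^{t'}|x_k|_c$, so $\sum_i q_i=0$ is again condition~(3); hence $w\overset{s}{\underset{(2\cdot t)}{\longrightarrow}}w'$.

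Now assume $w\overset{s}{\underset{(2\cdot t)}{\longrightarrow}}w'$. If some non-empty $I\varsubsetneq\{1,\dots,2t\}$ satisfies $\sum_{i\in I}p_i=\sum_{i\in I}q_i=0$, then since each $p_i=\pm1$ the set $I$ has equally many $ab$- and $ba$-pairs, so $|I|=2t_1$ with $1\le t_1$, and properness forces $t_1<t$; the complement $J$ then also satisfies $\sum_{i\in J}p_i=\sum_{i\in J}q_i=0$ (using $\sum_i p_i=\sum_i q_i=0$), with $|J|=2t_2$ and $t_1+t_2=t$. Interchanging first the pairs indexed by $I$ is, by the characterization just established, a strong $(2\cdot t_1)$ transformation $w\to w''$; since the $J$-pairs keep their positions in $w''$, still read $ab$ or $ba$, and still have the same number of $c$'s to their right, the ordered pairs attached to them are unchanged, so interchanging them is a strong $(2\cdot t_2)$ transformation $w''\to w'$, and the original transformation is reducible.

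For the converse, suppose $w\overset{s}{\underset{(2\cdot t_1)}{\longrightarrow}}w''\overset{s}{\underset{(2\cdot t_2)}{\longrightarrow}}w'$ with $t_1,t_2>0$. The crux is to show that both stages act through the letter pair $\{a,b\}$ and that the pairs they interchange are together exactly $\mu_1,\dots,\mu_{2t}$. For the first point I would delete every occurrence of $c$: interchanging an $xc$ with a $cx$ is invisible to this projection, so a stage using a letter pair containing $c$ leaves the projected word unchanged; since the projections of $w$ and $w'$ differ, and the set of non-overlapping $\{a,b\}$-transpositions carrying one to the other is uniquely determined (the positions at which the two projections differ form a disjoint union of adjacent pairs, tileable in only one way), not both stages can use a $c$-pair, and if exactly one did, the projection of $w$ would reach that of $w'$ in only $2t_i<2t$ transpositions, which is impossible. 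With both stages using $\{a,b\}$, let $E_1,E_2$ be the sets of positions they move; a two-letter argument identifies the positions where $w$ and $w'$ differ with $E_1\triangle E_2$, and comparing cardinalities ($4t=4t_1+4t_2$) forces $E_1\cap E_2=\emptyset$, so that $E_1$ and $E_2$ partition the $4t$ positions covered by $\mu_1,\dots,\mu_{2t}$. Since the tiling of each maximal run of those positions by adjacent pairs is unique, the pairs moved by the first stage are precisely $\{\mu_i:i\in I\}$ for some $I$ with $|I|=2t_1$, hence non-empty and proper, and the characterization applied to $w\overset{s}{\underset{(2\cdot t_1)}{\longrightarrow}}w''$ yields $\sum_{i\in I}p_i=\sum_{i\in I}q_i=0$. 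The step I expect to be the main obstacle is exactly this last ``cleanliness'' analysis --- ruling out a two-stage reduction that covertly shuffles $c$ out and back, or that splits a single $\mu_i$ between the stages --- and it is precisely this bookkeeping that \cite{wT16} already carries out for ordinary $(2\cdot t)$ transformations, which is why, given Remark~\ref{RelateStrong2tAnd2t}, it is cleaner to deduce the present theorem as a corollary than to reprove it.
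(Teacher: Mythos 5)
Your proposal is correct, and it is worth pointing out that the paper itself gives no proof of Theorem~\ref{Characterize2t}: it merely records, just before the statement, that by Remark~\ref{RelateStrong2tAnd2t} the theorem is a direct corollary of the analogous characterization of irreducible $(2\cdot t)$ transformations in \cite{wT16}. That citation route is exactly the one you name in your opening sentence, so you have identified the paper's approach; everything after that is a self-contained direct proof which the paper deliberately omits. Your direct argument is sound: grouping the $q_i$ by factors, so that each factor contributes $-|x_k|_c$ or $+|x_k|_c$, shows that $\sum_i q_i=0$ is precisely condition~(3) of Theorem~\ref{Strong2tTransformation} for an arbitrary admissible pairing (this also yields Remark~\ref{ArbitraryPairing} along the way); the backward reducibility direction via $I$ and its complement works because swapping $ab$ with $ba$ moves no $c$, so the ordered pairs of the untouched $\mu_j$ are unchanged in the intermediate word; and the forward direction's ``cleanliness'' analysis, which is the genuinely nontrivial content, is handled correctly --- deleting the $c$'s rules out a stage acting through a pair containing $c$ (the projections of $w$ and $w'$ differ in exactly $4t$ positions, while a single $\{a,b\}$-stage changes only $4t_i<4t$ of them), the $E_1\triangle E_2$ count forces the two stages to move disjoint position sets covering exactly the $\mu_i$-positions, and the uniqueness of the tiling of a path of even length by adjacent pairs identifies the stage pairs with the $\mu_i$. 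Compared with the paper, your route buys self-containedness and makes explicit the verification (that strong two-stage decompositions line up with the corresponding statements for ordinary $(2\cdot t)$ transformations) which the phrase ``direct corollary'' leaves to the reader; the paper's route buys brevity. One minor simplification: in the projection step the uniqueness-of-tiling remark is not needed, since the Hamming-distance count already gives the contradiction.
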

The following result immediately holds from Theorem \ref{Characterize2t}.
\begin{remark}\label{ArbitraryPairing}
Suppose $\Sigma$ is a ternary alphabet, $w,w'\in\Sigma^*$, and $t\ge 1$ . Suppose $w'$ is obtained from $w$ by rewriting $2t$ non-overlapping pairs of characters, where for some $a,b\in\Sigma$, each pair is either $ab$ or $ba$. Assume $w\overset{s}{\underset{(2\cdot t)}{\longrightarrow}}w'$. Arbitrarily pair up $ab$ and $ba$ from those pairs in $w$ to form $t$ factors $\beta_k$ as in Theorem \ref{Strong2tTransformation}. Then, condition~(3) in Theorem \ref{Strong2tTransformation} is satisfied.
\end{remark}

The proof of the following theorem is analogous to the proof of Theorem~$4.9$ in \cite{wT16}. However, the proof is shown here in the setting of strong $M$\!-equivalence (more precisely, by using Theorem \ref{Characterize2t}) so that our exposition is self-contained.

\begin{theorem}
Suppose $\Sigma=\{a,b,c\}$. For every $t\ge 1$, there exist $w,w'\in\Sigma^*$ such that $w\overset{s}{\underset{(2\cdot t)}{\longrightarrow}}w'$ is irreducible.
\end{theorem}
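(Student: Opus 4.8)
The plan is to use the factor-independent characterization in Theorem~\ref{Characterize2t}: I need to exhibit, for each $t\ge 1$, a pair $w,w'$ obtained by rewriting $2t$ non-overlapping pairs of characters (all of the form $ab$ or $ba$ for the fixed letters $a,b$) such that the associated vectors $(p_i,q_i)\in\{-1,1\}\times\mathbb Z$ satisfy $\sum_{i=1}^{2t}p_i=\sum_{i=1}^{2t}q_i=0$ but no proper nonempty subset $I\subsetneq\{1,\dots,2t\}$ has $\sum_{i\in I}p_i=\sum_{i\in I}q_i=0$. So the combinatorial heart is purely about the multiset of pairs $(p_i,q_i)$; once I have such a multiset, I only need to realize it by an actual word, which is routine (insert the requisite number of $c$'s between consecutive $ab$/$ba$ blocks, padding with extra letters as needed so that the $q_i = \pm|y_i|_c$ bookkeeping comes out right, and make sure no two of the $2t$ two-letter blocks overlap).

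First I would pick the multiset. A clean choice that forces irreducibility is to take the $2t$ pairs, in left-to-right order, to be
\[
(-1,0),\ (-1,1),\ (-1,2),\ \dots,\ (-1,t-1),\ (1,0),\ (1,-1),\ (1,-2),\ \dots,\ (1,-(t-1)),
\]
i.e. $t$ pairs with $p_i=-1$ and $q$-values $0,1,\dots,t-1$, followed by $t$ pairs with $p_i=1$ and $q$-values $0,-1,\dots,-(t-1)$. Clearly $\sum p_i = -t+t = 0$ and $\sum q_i = \tfrac{t(t-1)}2 - \tfrac{t(t-1)}2 = 0$, so by Theorem~\ref{Characterize2t} this is a valid strong $(2\cdot t)$ transformation. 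For irreducibility I must show no proper nonempty $I$ zeroes out both coordinates. If $I$ contains $r$ indices from the first block and $s$ from the second, then $\sum_{i\in I}p_i=0$ forces $r=s$, and $\sum_{i\in I}q_i=0$ forces the sum of the chosen $q$-values from the first block to equal the sum of the negated chosen $q$-values from the second block. Since the first-block $q$-values available are distinct elements of $\{0,1,\dots,t-1\}$ and likewise for the second, a short argument (e.g. comparing the minimum possible sum of $r$ chosen values, or observing that equality of sums of two $r$-subsets of $\{0,\dots,t-1\}$ together with the structure here) should be pushed to show $I$ must be everything; I should double-check whether this particular multiset truly has no nontrivial balanced subset, and if some small case leaks, switch to the analogue of the construction in \cite{wT16}, e.g. $q$-values $1,2,4,\dots,2^{t-1}$ paired against $-1,-2,-4,\dots,-2^{t-1}$, where distinctness of subset sums of powers of two makes the no-proper-subset claim immediate.

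Second, I would realize the chosen multiset as an explicit word. Writing each $ab$ as the literal factor $ab$ and each $ba$ as $ba$, I interleave copies of $c$ so that the number of $c$'s to the right of the $i$-th block equals $|q_i|$ in the manner dictated by $(\ast)$; concretely one builds $w$ as a concatenation $z_0\,\mu_1\,z_1\,\mu_2\cdots \mu_{2t}\,z_{2t}$ with each $z_j\in\{c\}^*$ chosen so the running count of trailing $c$'s produces exactly the prescribed $q_i$, and $w'$ replaces each $\mu_i$ by its swap. One checks the three conditions of Theorem~\ref{Strong2tTransformation} hold with the obvious pairing (condition~(3) is automatic by Remark~\ref{ArbitraryPairing}), so $w\overset{s}{\underset{(2\cdot t)}{\longrightarrow}}w'$, and the $(p_i,q_i)$ read off from this word are exactly the chosen multiset.

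The main obstacle is the combinatorial no-proper-subset verification: I must be certain the multiset of $(p_i,q_i)$ admits no balanced proper subset, and the naive arithmetic-progression choice may need replacing by a subset-sum-free choice (powers of two) to make this airtight. Everything else — the existence direction via Theorem~\ref{Characterize2t}, and the word realization — is bookkeeping.
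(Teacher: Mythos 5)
Your overall strategy (reduce everything to Theorem~\ref{Characterize2t} and exhibit a multiset of pairs $(p_i,q_i)$ with zero total sums but no balanced proper subset) is exactly the paper's strategy, but the combinatorial heart of your proposal fails. The multiset you pick, namely the pairs $(-1,0),(-1,1),\dots,(-1,t-1)$ together with $(1,0),(1,-1),\dots,(1,-(t-1))$, is mirror-symmetric: for every pair $(-1,q)$ in the first block the pair $(1,-q)$ sits in the second block, so already $I=\{(-1,0),(1,0)\}$ (or any $\{(-1,q),(1,-q)\}$) is a nonempty proper subset with $\sum_{i\in I}p_i=\sum_{i\in I}q_i=0$. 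Hence for every $t\ge 2$ your transformation is not merely reducible, it decomposes completely into $t$ strong $(2\cdot 1)$ transformations. Your fallback (powers of two, $q$-values $1,2,\dots,2^{t-1}$ against $-1,-2,\dots,-2^{t-1}$) has the identical defect: distinctness of subset sums is irrelevant, because the balanced subset $\{(-1,2^j),(1,-2^j)\}$ is always available. Any construction in which the $p=+1$ block is the exact negation of the $p=-1$ block is doomed for the same reason. There is also a realizability slip: by $(\ast)$, $p_i=-1$ forces $q_i=-|y_i|_c\le 0$ and $p_i=1$ forces $q_i\ge 0$, so pairs such as $(-1,1)$ or $(1,-1)$ cannot occur at all; flipping the signs to respect this constraint still leaves the mirror-symmetric, hence reducible, structure.

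What is needed is an asymmetric multiset, and this is what the paper does: take $w=(ab)^{t-1}c(ba)^tc^{t-1}ab$ and $w'=(ba)^{t-1}c(ab)^tc^{t-1}ba$, whose $2t$ rewritten pairs yield $(t-1)$ copies of $(-1,-t)$, $t$ copies of $(1,t-1)$, and a single $(-1,0)$. The totals vanish, and for any nonempty $I\varsubsetneq\{1,\dots,2t\}$ with $\sum_{i\in I}p_i=0$ one computes $\sum_{i\in I}q_i=t-\tfrac{|I|}{2}$ if the last pair lies in $I$ and $-\tfrac{|I|}{2}$ otherwise; since $0<|I|<2t$, neither is zero, so no proper subset balances and the transformation is irreducible by Theorem~\ref{Characterize2t}. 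Your word-realization step (interleaving blocks of $c$'s so that the running counts produce the prescribed $q_i$, then invoking Theorem~\ref{Strong2tTransformation}) is fine as bookkeeping, but without a correct multiset it has nothing to realize, so the proposal as it stands does not prove the theorem.
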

\begin{proof}
Every strong $(2\cdot 1)$ transformation is irreducible. Fix an integer $t>1$. Let $w=(ab)^{t-1}c(ba)^tc^{t-1}ab$ and $w'=(ba)^{t-1}c(ab)^tc^{t-1}ba$. Then, the $2t$ pairs of characters being rewritten from left to right are $\underbrace{ab,\ldots ,ab}_{t-1 \text{ times}}\underbrace{ba,\ldots ,ba}_{t \text{ times}},ab$. The corresponding ordered pairs, $(p_i,q_i),1\le i\le 2t$ are 
\begin{equation*}
\underbrace{(-1,-t),\ldots ,(-1,-t)}_{t-1 \text{ times}}\underbrace{(1,t-1),\ldots ,(1,t-1)}_{t \text{ times}},(-1,0).
\end{equation*}
By Theorem \ref{Characterize2t}, $w\overset{s}{\underset{(2\cdot t)}{\longrightarrow}}w'$.

To prove that $w\overset{s}{\underset{(2\cdot t)}{\longrightarrow}}w'$ is irreducible, we argue by contradiction. Assume $w\overset{s}{\underset{(2\cdot t)}{\longrightarrow}}w'$ is reducible. Then, by Theorem \ref{Characterize2t}, there exists a non-empty $I\varsubsetneq\{1,2,\ldots ,2t\}$ such that $\sum_{i\in I}p_i=\sum_{i\in I}q_i=0$. Since $\sum_{i\in I}p_i=0$, it follows that $|\{i\in I\,|\,\mu_i=ab\}|=|\{i\in I\,|\,\mu_i=ba\}|=\frac{|I|}{2}$. Observe that if $2t\in I$, then $\sum_{i\in I}q_i=-(\frac{|I|}{2}-1)t+\frac{|I|}{2}(t-1)=t-\frac{|I|}{2}$. Otherwise, if $2t\not\in I$, then $\sum_{i\in I}q_i=-\frac{|I|}{2}t+\frac{|I|}{2}(t-1)=-\frac{|I|}{2}$. Since $0<|I|<2t$, it follows that $\sum_{i\in I}q_i\neq 0$ in either case, which is a contradiction. Thus, our conclusion holds.
\end{proof}

The following is a structural characterization of pairs of words that are obtained from one another by an irreducible strong ($2\cdot 2$) transformation.
\begin{theorem}\label{2.2Irreducible}
Suppose $\Sigma=\{a,b,c\}$ and $w,w'\in\Sigma^*$. Then $w\overset{s}{\underset{(2\cdot 2)}{\longrightarrow}}w'$ is irreducible iff  $w=w_1\boldsymbol{ab}w_2\boldsymbol{ba}w_3\boldsymbol{ba}w_4\boldsymbol{ab}w_5$ and 
$w'=w_1\boldsymbol{ba}w_2\boldsymbol{ab}w_3\boldsymbol{ab}w_4\boldsymbol{ba}w_5$ for some $w_i\in\Sigma^*,1\le i\le 5$ such that $|w_2|_c=|w_4|_c>0$.
\end{theorem}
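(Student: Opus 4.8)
The plan is to use the factor-independent framework of Theorem \ref{Characterize2t}, since the characterization of irreducibility is stated there purely in terms of the ordered pairs $(p_i,q_i)$ attached to the $2t=4$ rewritten pairs of characters. First I would fix $a,b\in\Sigma$ so that all four rewritten pairs are $ab$ or $ba$, write $c$ for the third letter, and enumerate the four pairs left to right as $\mu_1,\mu_2,\mu_3,\mu_4$ with their ordered pairs $(p_i,q_i)$ as in $(\ast)$. By Theorem \ref{Characterize2t}, $w\overset{s}{\underset{(2\cdot 2)}{\longrightarrow}}w'$ forces $\sum_{i=1}^4 p_i=0$, so exactly two of the $\mu_i$ equal $ab$ and two equal $ba$; and irreducibility is equivalent to: no nonempty proper $I\subsetneq\{1,2,3,4\}$ has $\sum_{i\in I}p_i=\sum_{i\in I}q_i=0$.

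The combinatorial heart is an easy case analysis on where the two $ab$'s and two $ba$'s sit among positions $1,2,3,4$. For any singleton $I$, $\sum_{i\in I}p_i=\pm1\neq0$, so singletons never obstruct; and $|I|=3$ gives $\sum_{i\in I}p_i=\pm1\neq0$ as well. So the only candidate obstructing sets have $|I|=2$, and for $\sum_{i\in I}p_i=0$ such an $I$ must consist of one $ab$-index and one $ba$-index. There are three ways to partition $\{1,2,3,4\}$ into two such pairs; reducibility means one of these pairs $I$ also satisfies $\sum_{i\in I}q_i=0$. I would check that if the order of the four pairs is (reading signs of $p_i$) $-,-,+,+$ or $+,+,-,-$, then the pair $\{1,2\}$ (the two like signs) has $p$-sum $\mp2\neq0$, but the ``crossing'' pairs $\{1,3\},\{1,4\},\{2,3\},\{2,4\}$ are the mixed ones, and one verifies that at least one of them can be made a $q$-obstruction unless the $q_i$ are forced apart — this is exactly where the condition $|w_2|_c=|w_4|_c>0$ will come from. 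More precisely, the only sign pattern that can possibly be irreducible is the alternating-then-nested pattern $ab,ba,ba,ab$ (equivalently $ba,ab,ab,ba$), because any pattern in which two consecutive or two nested $ab$'s enclose no $c$ in between yields a $q$-obstruction. So the structural claim is that after possibly interchanging the roles of $ab$ and $ba$, $w$ has the form $w_1\,ab\,w_2\,ba\,w_3\,ba\,w_4\,ab\,w_5$.

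Granting the pattern $ab,ba,ba,ab$, I would compute the four $q_i$ in terms of the $c$-counts of the blocks. Writing $n_j=|w_j|_c$ and $N=n_2+n_3+n_4+n_5$ for the total $c$'s to the right of $\mu_1$'s position (adjusted for the letters consumed), one gets $q_1=-N$, $q_2=+(n_3+n_4+n_5)$, $q_3=+(n_4+n_5)$, $q_4=-n_5$; the constraint $\sum q_i=0$ is automatic from $\sum p_i q_i$-bookkeeping (Remark \ref{ArbitraryPairing}), and indeed $-N+(n_3+n_4+n_5)+(n_4+n_5)-n_5 = -n_2+n_4=0$ would be wrong, so I would be careful: the true identity $\sum q_i=0$ must hold by Theorem \ref{Characterize2t} since the transformation is a strong $(2\cdot2)$ transformation, which pins down one linear relation among the $n_j$; I expect it to reduce to $n_2=n_4$. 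Then the three mixed index-pairs are $\{1,4\},\{2,3\}$ and the ``outer/inner'' pair — and for each I check $\sum q_i\neq0$: e.g. $q_2+q_3=(n_3+2n_4+2n_5)\neq0$ unless all those vanish, $q_1+q_4=-(N+n_5)\neq0$ unless $N=n_5=0$, and the remaining mixed pair's $q$-sum equals $\pm n_2$ (= $\pm n_4$). Hence irreducibility holds iff $n_2=n_4$ (forced) and $n_2=n_4\neq0$ (so the last obstruction fails), which is exactly $|w_2|_c=|w_4|_c>0$. Conversely, given words of the displayed form with $|w_2|_c=|w_4|_c>0$, the same computation shows $\sum p_i=\sum q_i=0$ (so it is a strong $(2\cdot2)$ transformation) and no proper $I$ obstructs, giving irreducibility.

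**Main obstacle.** The genuine work is the bookkeeping in the last paragraph: correctly translating the $(\ast)$-definition of $q_i$ (which counts $c$'s to the right of $\mu_i$ inside the whole word $w$) into the block counts $|w_j|_c$, keeping track of which $a$/$b$ letters are consumed by the $\mu$'s, and then verifying that each of the finitely many proper subsets $I$ with $\sum_{i\in I}p_i=0$ fails $\sum_{i\in I}q_i=0$ precisely under the stated hypothesis. The sign-pattern elimination (showing only $ab,ba,ba,ab$ survives) should be routine once the six two-element subsets are tabulated, but it is the step most prone to slips, so I would organize it as an explicit table of the three $p$-balanced pairings for each admissible arrangement of $\{ab,ab,ba,ba\}$.
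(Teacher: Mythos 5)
Your plan is essentially the paper's: both arguments rest on Theorem \ref{Characterize2t}, a case analysis over the possible arrangements of the two $ab$'s and two $ba$'s (with the first rewritten pair named $ab$, just as in the paper's convention), and the observation that only the arrangement $ab,ba,ba,ab$ can survive. The paper phrases the forward direction slightly differently — it invokes Remark \ref{ArbitraryPairing} to choose a convenient set of factors, reads off the $c$-count constraints from condition (3) of Theorem \ref{Strong2tTransformation}, and exhibits explicit decompositions into two strong $(2\cdot 1)$ transformations in the reducible cases — whereas you work directly with the subset criterion of Theorem \ref{Characterize2t}; these are cosmetic differences. Your backward direction and final conclusion are correct.

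However, the verification in your decisive case is wrong as written. For the pattern $\mu_1\mu_2\mu_3\mu_4=ab,ba,ba,ab$ the subsets $\{1,4\}$ and $\{2,3\}$ are the \emph{like-signed} ones, with $p$-sums $-2$ and $+2$; they can never witness reducibility, so computing $q_1+q_4$ and $q_2+q_3$ proves nothing. The $p$-balanced two-element subsets are $\{1,2\},\{1,3\},\{2,4\},\{3,4\}$ (four of them, not ``three mixed pairs with one remaining''), and with $n_j=|w_j|_c$ their $q$-sums are $-n_2$, $-(n_2+n_3)$, $n_3+n_4$, $n_4$. Also, your hesitation about the total is unnecessary: $\sum_{i=1}^4 q_i=n_4-n_2$, so being a strong $(2\cdot 2)$ transformation forces $n_2=n_4$, and then Theorem \ref{Characterize2t} gives reducibility iff $n_2=0$ or $n_4=0$; hence irreducibility iff $n_2=n_4>0$, which is the theorem. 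So the statement you reach is right, but the written check leaves $\{1,3\}$, $\{2,4\}$ and one of $\{1,2\},\{3,4\}$ unexamined while examining two irrelevant subsets; the repair is a half-line, but it must be made. A smaller slip of the same kind occurs in your elimination of the other sign patterns: the obstruction there is an adjacent $ab$ and $ba$ (one of each, so $p$-balanced) with no $c$ between them — forced by $\sum q_i=0$ in those patterns — not ``two consecutive or nested $ab$'s''; with that correction the tabulation you describe goes through.
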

\begin{proof}
The backward direction of the theorem is immediate from Theorem \ref{Strong2tTransformation} and Theorem~\ref{Characterize2t}, thus it remains to prove the forward direction. Suppose $w\overset{s}{\underset{(2\cdot 2)}{\longrightarrow}}w'$. Let $\beta_1$ and $\beta_2$ be the two factors in $w$ that are involved in the transformation. Then, by conditions (1) and (2) in Theorem~\ref{Strong2tTransformation}, for some $w_i\in\Sigma^*,1\le i\le 5$, either one of the following cases holds. Note that by Remark~\ref{ArbitraryPairing}, for each case, we can arbitrarily choose the associated set of factors $\{\beta_1,\beta_2\}$.

\begin{case}$w=w_1\boldsymbol{ab}w_2\boldsymbol{ba}w_3\boldsymbol{ab}w_4\boldsymbol{ba}w_5$ and 
$w'=w_1\boldsymbol{ba}w_2\boldsymbol{ab}w_3\boldsymbol{ba}w_4\boldsymbol{ab}w_5$.\\
Choose $\{\beta_1,\beta_2\}=\{abw_2ba, abw_4ba\}$. Then, by condition (3) in Theorem~\ref{Strong2tTransformation}, it follows that  $|w_2|_c=|w_4|_c=0$.
It remains to be seen that 
\begin{equation*}
w\overset{s}{\underset{(2\cdot 1)}{\longrightarrow}}w_1baw_2abw_3abw_4baw_5\overset{s}{\underset{(2\cdot 1)}{\longrightarrow}}w'. 
\end{equation*}
Therefore, $w$ and $w'$ cannot be in this form if $w\overset{s}{\underset{(2\cdot 2)}{\longrightarrow}}w'$ is irreducible.
\end{case}
\begin{case} $w=w_1\boldsymbol{ab}w_2\boldsymbol{ba}w_3\boldsymbol{ba}w_4\boldsymbol{ab}w_5$ and 
$w'=w_1\boldsymbol{ba}w_2\boldsymbol{ab}w_3\boldsymbol{ab}w_4\boldsymbol{ba}w_5$.\\
Choose $\{\beta_1,\beta_2\}=\{abw_2ba, baw_4ab\}$. Then, by condition (3) in Theorem \ref{Strong2tTransformation}, it follows that $|w_2|_c=|w_4|_c$.
Suppose $w\overset{s}{\underset{(2\cdot 2)}{\longrightarrow}}w'$ is irreducible. Assume that $|w_2|_c=|w_4|_c=0$. Then, by similar argument as in Case~1, $w\overset{s}{\underset{(2\cdot 2)}{\longrightarrow}}w'$ is reducible, which is a contradiction. Therefore, \mbox{$|w_2|_c=|w_4|_c>0$.}
\end{case}
\begin{case}$w=w_1\boldsymbol{ab}w_2\boldsymbol{ab}w_3\boldsymbol{ba}w_4\boldsymbol{ba}w_5$ and 
$w'=w_1\boldsymbol{ba}w_2\boldsymbol{ba}w_3\boldsymbol{ab}w_4\boldsymbol{ab}w_5$.\\
Choose $\{\beta_1,\beta_2\}=\{abw_2abw_3ba, abw_3baw_4ba\}$. Then, by condition (3) in Theorem \ref{Strong2tTransformation}, it follows that $|w_2|_c=|w_3|_c=|w_4|_c=0$. By similar argument as in Case~1, $w$ and $w'$ cannot be in this form if $w\overset{s}{\underset{(2\cdot 2)}{\longrightarrow}}w'$ is irreducible.
\end{case}
\end{proof}

\section{On the Characterization of Strong $M$\!-equivalence for the Ternary Alphabet}

It has been shown that the strong $(2\cdot t)$ and strong $(3\cdot t)$ transformations naturally preserve strong $M$\!-equivalence for the ternary alphabet. In fact, to develop a complete natural characterization of strongly $M$\!-equivalent ternary words, both rules are essential. The main question now is to what extent these transformations are able to provide such characterization.

To answer that question, we look into the notion of $\alpha\beta$-transformations and \textit{MSAE}-equivalence, both introduced in \cite{GT16a}.
\begin{definition}\label{AlphaBetaTransformation}
Suppose $\Sigma$ is an alphabet and $w,w'\in\Sigma^*$. Let $\alpha$ and $\beta$ be any two distinct characters of $\Sigma$. We say that $w$ transforms into $w'$ using an $\alpha\beta$-transformation, denoted by $w\overset{(\alpha\beta)}{\longrightarrow}w'$, if and only if $w=x\alpha\beta y\beta\alpha z$ and $w'=x\beta\alpha y\alpha\beta z$ for some $x,y,z\in\Sigma^*$.
\end{definition}
Observe that every strong $(2\cdot t)$ and strong $(3\cdot t)$ transformation is a disjoint combination of some $\alpha\beta$-transformations. Each $\alpha\beta$-transformation alone, however, does not always preserve strong $M$\!-equivalence.

Suppose $\Sigma=\{a,b,c\}$ and $w,w'\in\Sigma^*$. It was shown in \cite{GT16a} that whenever $w$ undergoes an $\alpha\beta$-transformation, it suffices to note only the changes in the number of occurrences of $abc, acb,$ and $bac$ as subwords in $w$. This consequently motivated the following definition.

\begin{definition}\label{MSAEcounter}
Suppose $\Sigma=\{a,b,c\}$ and $w,w'\in\Sigma^*$. Suppose $w\overset{(\alpha\beta)}{\longrightarrow}w'$ for any two distinct $\alpha ,\beta\in\Sigma$. Let $\delta_v=|w|_v-|w'|_v$ for every $v\in\{abc,acb,bac\}$. Then, we say that $w'$ is obtained from $w$ by an $\alpha\beta$-transformation with a \textit{counter} $\Omega=(\delta_{abc},\delta_{acb},\delta_{bac})$, denoted by $w\overset{\Omega}{\longrightarrow}w'$.
\end{definition}

\begin{definition}\label{DefineMSAE}
Suppose $\Sigma=\{a,b,c\}$ and $w,w'\in\Sigma^*$. We say that $w$ and $w'$ are \textit{almost strongly elementarily matrix equivalent} (\textit{MSAE}-equivalent), denoted by $w\equiv_{{\it MSAE}}w'$, if and only if there exist $w_0,w_1,\ldots , w_n\in\Sigma^*$ such that $w=w_0\overset{\Omega_1}{\longrightarrow}w_1\overset{\Omega_2}{\longrightarrow}w_2\overset{\Omega_3}{\longrightarrow}\cdots \overset{\Omega_n}{\longrightarrow}w_n=w'$ for some counters $\Omega_i,1\le i\le n,$ satisfying $\sum_{i=1}^{n}\Omega_i=(0,0,0)$.
\end{definition}

\begin{theorem}\label{CharacterizeStrM}\cite{GT16a}
Suppose $\Sigma=\{a,b,c\}$ and $w,w'\in\Sigma^*$. If $w\equiv_{{\it MSAE}}w'$, then $w\overset{s}{\equiv}_Mw'$.
\end{theorem}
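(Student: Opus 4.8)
The plan is to show that a single $\alpha\beta$-transformation changes the Parikh matrix in a way that is completely determined by the counter $\Omega$, and then to sum these changes along the chain witnessing \textit{MSAE}-equivalence. By Remark~\ref{SufficientOrder}, it suffices to prove $w\equiv_M w'$ with respect to the three ordered alphabets $\{a<b<c\}$, $\{b<a<c\}$, and $\{a<c<b\}$. For each such ordered alphabet, the Parikh matrix is a $4\times 4$ upper triangular matrix with unit diagonal, so by Theorem~\ref{PropertiesParikhMat} the content of the matrix is captured by the six numbers $|w|_{\sigma}$ and $|w|_{\sigma\tau}$ for the single letters $\sigma$ and the two-letter factors $\sigma\tau$ that appear as $a_{i,j}$'s, together with the three three-letter counts $|w|_{a_1a_2a_3}$. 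The first observation is that any $\alpha\beta$-transformation leaves all subword counts of length $\le 2$ unchanged, since it merely permutes occurrences of letters and of the pairs $\alpha\beta,\beta\alpha$ within the fixed surrounding context; so along the whole chain only the three length-$3$ counts $|w|_{abc}$, $|w|_{acb}$, $|w|_{bac}$ can move.

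Next I would record, using the analysis underlying Definition~\ref{MSAEcounter} (and Lemma~\ref{LemmaChangeInTopRight} for the $ab$-case), that when $w_{i-1}\overset{\Omega_i}{\longrightarrow}w_i$ with $\Omega_i=(\delta_{abc}^{(i)},\delta_{acb}^{(i)},\delta_{bac}^{(i)})$, we have exactly
\begin{equation*}
|w_{i-1}|_{abc}-|w_i|_{abc}=\delta_{abc}^{(i)},\quad
|w_{i-1}|_{acb}-|w_i|_{acb}=\delta_{acb}^{(i)},\quad
|w_{i-1}|_{bac}-|w_i|_{bac}=\delta_{bac}^{(i)},
\end{equation*}
by the very definition of the counter. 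Here the key input from \cite{GT16a} is that these three triplets of "fundamental" length-$3$ counts are the only ones that need to be tracked: every other length-$3$ subword count, as well as every length-$3$ count relevant to the other two ordered alphabets $\{b<a<c\}$ and $\{a<c<b\}$, is either unchanged by any $\alpha\beta$-transformation or expressible from $abc,acb,bac$ together with invariant shorter counts. I would make this reduction explicit: under $\{b<a<c\}$ the relevant top-corner entry is $|w|_{bac}$, and under $\{a<c<b\}$ it is $|w|_{acb}$, both of which are among the tracked quantities; and the middle superdiagonal-plus-one entries of those matrices are length-$2$ counts, already known to be invariant.

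Then I would telescope. Summing the displayed equalities over $i=1,\dots,n$ gives
\begin{equation*}
|w|_{abc}-|w'|_{abc}=\sum_{i=1}^n\delta_{abc}^{(i)},\qquad
|w|_{acb}-|w'|_{acb}=\sum_{i=1}^n\delta_{acb}^{(i)},\qquad
|w|_{bac}-|w'|_{bac}=\sum_{i=1}^n\delta_{bac}^{(i)},
\end{equation*}
and the hypothesis $\sum_{i=1}^n\Omega_i=(0,0,0)$ forces all three right-hand sides to vanish. Combined with the length-$\le 2$ invariance from the first step, this yields $\Psi_\Gamma(w)=\Psi_\Gamma(w')$ for each of $\Gamma\in\{\{a<b<c\},\{b<a<c\},\{a<c<b\}\}$, and hence $w\overset{s}{\equiv}_Mw'$ by Remark~\ref{SufficientOrder}. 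The main obstacle is the bookkeeping in the second paragraph: one must verify carefully that the three counters $abc,acb,bac$ really do control the full Parikh matrices for all three sufficient orderings simultaneously, i.e. that no stray length-$3$ count in $\{b<a<c\}$ or $\{a<c<b\}$ escapes this list — this is precisely the content borrowed from \cite{GT16a}, and once it is cited the rest is a routine telescoping argument.
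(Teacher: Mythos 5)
Your proof is correct, but note that there is no in-paper proof to compare it with: the paper states Theorem~\ref{CharacterizeStrM} as a result imported from \cite{GT16a}, so what you have written is a self-contained derivation rather than a variant of an argument given here. Your derivation is the natural one and it is sound. The two points that carry all the weight are exactly the ones you isolate: (i) every $\alpha\beta$-transformation preserves all subword counts of length at most two --- this holds because the swap $\alpha\beta\mapsto\beta\alpha$ at the first site and $\beta\alpha\mapsto\alpha\beta$ at the second site change $|\cdot|_{\alpha\beta}$ and $|\cdot|_{\beta\alpha}$ by $\mp 1$ and $\pm 1$ respectively (so the two sites cancel), while no marked letter changes its relative order to any letter outside the two swapped pairs, so counts involving the third letter or repeated letters are untouched; and (ii) by Theorem~\ref{PropertiesParikhMat}, the only length-three entries occurring in the Parikh matrices for the three orderings of Remark~\ref{SufficientOrder} are $|w|_{abc}$, $|w|_{bac}$ and $|w|_{acb}$, which are precisely the coordinates of the counter in Definition~\ref{MSAEcounter}. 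Given these, the telescoping sum together with the hypothesis $\sum_{i=1}^{n}\Omega_i=(0,0,0)$ from Definition~\ref{DefineMSAE} immediately gives equality of all nine matrices, hence $w\overset{s}{\equiv}_Mw'$. One small remark: you do not actually need Lemma~\ref{LemmaChangeInTopRight} or any ``key input'' from \cite{GT16a} for the bookkeeping step, since the counter is by definition the actual change $|w_{i-1}|_v-|w_i|_v$ for $v\in\{abc,acb,bac\}$; the explicit check you give of which entries appear for $\{b<a<c\}$ and $\{a<c<b\}$ already closes that gap, so the appeal to the reference there is redundant rather than load-bearing.
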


Theorem \ref{CharacterizeStrM} established \textit{MSAE}-equivalence as a sound rewriting system in determining when two ternary words are strongly $M$\!-equivalent. We observe that the converse of Theorem \ref{CharacterizeStrM} is true if and only if the following is true.

\vspace{0.35em}\begin{quote}
\textit{Whenever two ternary words are strongly $M$\!-equivalent, they are obtainable from one another by finitely many $\alpha\beta$-transformations. }
\end{quote}\vspace{0.35em}

However, one can see that this property does not hold by looking at the pair of words $abcabcabcabcabcabc$ and $cabababcabccabccab$. These words are strongly $M$\!-equivalent with respect to $\{a,b,c\}$ but no $\alpha\beta$-transformation can be applied on the former word. Hence, to obtain a complete natural characterization of strong $M$\!-equivalence, we have to cleverly develop strong $M$\!-equivalence preserving rewriting rules that are not only combinations of $\alpha\beta$-transformations.

\section{Conclusion}
The quest to solve the strong injectivity problem for arbitrary \mbox{alphabet} \mbox{remains} open. However, for the ternary alphabet, we have shown that the strong $(2\cdot t)$ and strong $(3\cdot t)$ transformations constitute a sound expansion of the elementary rewriting rules. Furthermore, extending the notion of strong $(2\cdot t)$ transformation for arbitrary alphabet is a possible future work.

Finally, we conclude by pointing out an interesting problem, related directly to Theorem \ref{2.2Irreducible}.
\begin{question}\label{Ques}
Suppose $\Sigma$ is a ternary alphabet and $w,w'\in\Sigma^*$. If $w\overset{s}{\underset{(2\cdot 2)}{\longrightarrow}}w'$ is irreducible, can $w'$ be obtained from $w$ by finitely many strong $(2\cdot 1)$ transformations?
\end{question}
This problem is analogous to Conjecture~6.1 in \cite{wT16}, where it was surmised that if a word is obtained from another by an irreducible $(2\cdot 2)$ transformation, then the two words are not \textit{ME}-equivalent. Similarly, should the answer to Question~\ref{Ques} be no, then any two words obtained from one another by an irreducible strong $(2\cdot 2)$ transformation are not \textit{MSE}-equivalent.

\section{Acknowledgement}
We gratefully acknowledge the support for this research by a short term grant No. 304/PMATHS/6313077 of Universiti Sains Malaysia. Part of the work has been presented orally in the 4th International Conference on Mathematical Sciences (ICMS4) held on 15-17 Nov 2016 at Putrajaya, Malaysia.


\end{document}